\numberwithin{equation}{section}
\numberwithin{figure}{section}
\theoremstyle{plain}
\newtheorem{thm}{\protect\theoremname}[section]
\theoremstyle{remark}
\newtheorem{rem}[thm]{\protect\remarkname}
\theoremstyle{plain}
\newtheorem{cor}[thm]{\protect\corollaryname}
\theoremstyle{plain}
\newtheorem{conjecture}[thm]{\protect\conjecturename}
\theoremstyle{plain}
\newtheorem{fact}[thm]{\protect\factname}
\theoremstyle{plain}
\newtheorem{lem}[thm]{\protect\lemmaname}
\providecommand{\conjecturename}{Conjecture}
\providecommand{\corollaryname}{Corollary}
\providecommand{\factname}{Fact}
\providecommand{\lemmaname}{Lemma}
\providecommand{\remarkname}{Remark}
\providecommand{\theoremname}{Theorem}
\begin{document}
\title[positive ground state solutions for the Choquard type equation]{The existence of positive ground state solutions for the Choquard
type equation on groups of polynomial growth}
\author{Ruowei Li}
\address{Ruowei Li: School of Mathematical Sciences, Fudan University, Shanghai
200433, People\textquoteright s Republic of China; Shanghai Center
for Mathematical Sciences, Fudan University, Shanghai 200433, People\textquoteright s
Republic of China}
\email{rwli19@fudan.edu.cn}
\begin{abstract}
In this paper, let $G$ be a Cayley graph of a discrete group of polynomial
growth with homogeneous dimension $N\geq3$. We study the Choquard
type equation on $G$:
\begin{equation}
\Delta u+(R_{\alpha}\ast\mid u\mid^{p})\mid u\mid^{p-2}u=0,\label{eq: choquard equation}
\end{equation}
where $\alpha\in(0,N)$, $p>\frac{N+\alpha}{N-2}$ and $R_{\alpha}$
stands for the Green\textquoteright s function of the discrete fractional
Laplace operator, which has same asymptotics as the Riesz potential.
We prove the discrete Hardy-Littlewood-Sobolev inequality on such
Cayley graphs, and by the discrete Concentration-Compactness principle
we prove the existence of extremal functions for the corresponding
Sobolev type inequalities in supercritical cases, which yields a positive
ground state solution of (\ref{eq: choquard equation}). Moreover,
we obtain positive ground state solutions of Choquard type equations
with $p$-Laplace, biharmonic and $p$-biharmonic operators etc.
\end{abstract}

\maketitle

\section{Introduction}

The nonlinear Choquard equation 
\begin{equation}
\Delta u+u=(I_{\alpha}\ast\mid u\mid^{p})\mid u\mid^{p-2}u,\;\text{in \ensuremath{\mathbb{R}^{N}}}\label{eq: classical choquard equation}
\end{equation}
where $I_{\alpha}(x)=\frac{\Gamma(\frac{N-\alpha}{2})}{\Gamma(\frac{\alpha}{2})\pi^{N/2}2^{\alpha}\mid x\mid^{N-\alpha}}$
is the Riesz potential, has been extensively studied in the literature.
In the physical case $N=3,p=2$ and $\alpha=2,$ in 1954 the problem
appeared in a work by S. I. Pekar describing the quantum mechanics
of a polaron at rest \cite{P54}. In 1976 P. Choquard used (\ref{eq: classical choquard equation})
to describe an electron trapped in its own hole, in a certain approximation
to Hartree--Fock theory of one component plasma \cite{L76}. In 1996
R. Penrose proposed (\ref{eq: classical choquard equation}) as a
model of self-gravitating matter, in a programme in which quantum
state reduction is understood as a gravitational phenomenon \cite{MPT98}.
The equations of type (\ref{eq: classical choquard equation}) are
usually called the nonlinear Schrödinger--Newton equation. If $u$
solves (\ref{eq: classical choquard equation}) then the function
$\psi$ defined by $\psi(t,x)=e^{it}u(x)$ is a solitary wave of the
focusing time-dependent Hartree equation
\[
i\psi_{t}=-\Delta\psi-(I_{\alpha}\ast\mid\psi\mid^{p})\mid\psi\mid^{p-2}\psi,\;\text{in \ensuremath{\mathbb{R}_{+}\times\mathbb{R}^{N}}}.
\]
In this context (\ref{eq: classical choquard equation}) is also known
as the stationary nonlinear Hartree equation.

The existence of solutions was proved for $N=3,p=2$ and $\alpha=2,$
with the symmetrization method, the Strauss lemma \cite{S77}, and
the ordinary differential equation techniques by E. H. Lieb, P. L.
Lions and G. Menzala \cite{L76,L1,L80}. Li Ma and Lin Zhao classified
all positive solutions by the method of moving plane in an integral
form \cite{MZ10}. For higher dimensions, P. L. Lions \cite{L1,L2,L3,L4}
established the Concentration-Compactness method, which provided a
new idea for proving the existence result. The general idea is as
follows. Since the problem (\ref{eq: classical choquard equation})
has a variational structure, take a maximizing sequence $\{u_{n}\}$
for the variational problem
\[
\text{sup\ensuremath{\left\{ \underset{\mathbb{R}^{N}}{\int}(I_{\alpha}\ast\mid u\mid^{p})\mid u\mid^{p}:\lVert u\rVert_{H^{1}}=1\right\} }}
\]
and regard $\left\{ \left(|\nabla u_{n}|^{2}+u_{n}^{2}\right)\mathrm{dx}\right\} $
as a sequence of probability measures. He proved in \cite{L1,L3}
that there are three cases of the limit of the sequence: compactness,
vanishing and dichotomy. Vanishing and dichotomy are ruled out by
the rescaling trick and the subadditivity inequality. Therefore, the
nontrivial limit function, which is called the extremal function,
exists by the compactness, and after scaling it satisfies (\ref{eq: classical choquard equation}).
By the Concentration-Compactness method of P. L. Lions \cite[Lemma I.1]{L1}
(see also \cite[Lemma 1.21]{W96}, \cite{GdEYZ20}), V. Moroz and
J. Van Schaftingen in \cite{MV13} proved the optimal parameter range
of the existence of a positive ground state solution, i.e. $\frac{N+\alpha}{N}<p<\frac{N+\alpha}{N-2}$,
and the range is sharp. They established regularity, constant sign,
radially symmetric, monotone decaying, and decay asymptotics of the
ground state solutions. They also studied the Choquard equations with
a general class of nonhomogeneous nonlinearities \cite{MV15}, and
with a potential term $V$ \cite{MV15-2}. Multiple solutions results
were obtained in \cite{CCS12}. There are intensive studies of the
Choquard equations, see survey papers \cite{MV17,SM20} and \cite{TM99,GY18,MV15-3,DSS15,GV16,AY14,GY17,ANY16,M80,SGY16,MV13-2}
references therein.

In recent years, people paid attention to the analysis on discrete
spaces, especially for the nonlinear equations, see for example \cite{GLY16,GLY17,HLY20,CMW16,LZY20,GJ18,HSZ20,ZZ18,HL21,HX21,HLW22,HLM22}.
As far as we know, there is no existence results for the nonlinear
Choquard equation on graphs as the continuous setting. In this article,
we prove the existence results on graphs by discrete Concentration-Compactness
principle and generalize our previous results on the existence of
solutions for $p$-Laplace equations and $p$-biharmonic equations
\cite{HL21,HLM22} to the Choquard type equations.

Let $(G,S)$ be a Cayley graph of a group $G$ with a finite symmetric
generating set $S$, i.e. $S=S^{-1}.$ There is a natural metric on
$(G,S)$ called the word metric, denoted by $d^{S}$. Let $B_{p}^{S}(n):={\left\{ x\in G\mid d^{S}(p,x)\leq n\right\} }$
denote the closed ball of radius $n$ centered at $p\in G$ and denote
$\mid B_{p}^{S}(n)\mid:=\sharp B_{p}^{S}(n)$ as the volume (i.e.
cardinality) of the set $B_{p}^{S}(n)$. When $e$ is the unit element
of $G$, the volume $\beta_{S}(n):=\mid B_{e}^{S}(n)\mid$ of $B_{e}^{S}(n)$
is called the growth function of the group, see \cite{M68-1,M68-2,W68,G90,G97,S55,G14-2}.
A group $G$ is called of polynomial growth, or of polynomial volume
growth, if $\beta_{S}(n)\leq Cn^{A}$, for any $n\geq1$ and some
$A>0$, which is independent of the choice of the generating set $S$
since the metrics $d^{S}$ and $d^{S_{1}}$ are bi-Lipschitz equivalent
for different finite generating sets $S$ and $S_{1}$. By Gromov's
theorem and Bass' volume growth estimate of nilpotent groups \cite{B72},
for any group $G$ of polynomial growth there are constants $C_{1}(S)$,
$C_{2}(S)$ depending on $S$ and $N\in\mathbb{N}$ such that for
any $n\geq1$, 
\[
C_{1}(S)n^{N}\leq\beta_{S}(n)\leq C_{2}(S)n^{N},
\]
where the integer $N$ is called the homogeneous dimension or the
growth degree of $G$. Since $N$ is a sort of dimensional constant
of $G$, we always omit the dependence of $N$ in various constants. 

In this paper, we consider the Cayley graph $(G,S)$ of a group of
polynomial growth with the homogeneous dimension $N\geq3$. In particular,
$\mathbb{Z}^{N}$ is a Cayley graph of a free abelian group. We denote
by $\ell^{p}(G)$ the $\ell^{p}$-summable functions on $G$ and by
$D_{0}^{k,p}(G)$ $(k=1,2)$ the completion of finitely supported
functions in the $D^{k,p}$ norm, where
\[
\lVert u\rVert_{D^{1,p}(G)}\coloneqq\lVert\lvert\nabla u\rvert_{p}^{p}\rVert_{\ell^{1}(G)}^{1/p}=\left(\sum\limits _{x\in G}\sum\limits _{y\sim x}\lvert\nabla_{xy}u\rvert^{p}\right)^{1/p},\;\nabla_{xy}u\coloneqq u(y)-u(x),
\]
\[
\lVert u\rVert_{D^{2,p}(G)}\coloneqq\lVert\Delta u\rVert_{\ell^{p}(G)},\;\Delta u(x)\coloneqq\sum\limits _{y\sim x}\nabla_{xy}u,
\]
see Section 2 for details.

By a standard trick and the isoperimetric estimate \cite[Theorem 4.18]{W00},
the discrete Sobolev inequality (\ref{eq:discrete sobo}) holds on
$G$, see \cite[Theorem 3.6]{HM15}, 

\begin{equation}
\lVert u\rVert_{\ell^{q}}\leq C_{p}\lVert u\rVert_{D^{1,p}},\;\forall u\in D_{0}^{1,p}(G),\label{eq:discrete sobo}
\end{equation}
where $N\geq2,1\leq p<N,q=p^{\ast}\coloneqq\dfrac{Np}{N-p}$. Since
$\ell^{p}(G)$ embeds into $\ell^{q}(G)$ for any $q>p$, see \cite[Lemma 2.1]{HLY15},
one verifies that the discrete Sobolev inequality (\ref{eq:discrete sobo})
hold when $q\geq p^{\ast}$. Recalling the continuous setting, it
is called the subcritical for $q<p^{\ast}$, critical for $q=p^{\ast}$
and supercritical for $q>p^{\ast}$ for the Sobolev inequality. Therefore,
(\ref{eq:discrete sobo}) hold in both critical and supercritical
cases on $G$.

The discrete Hardy-Littlewood-Sobolev (HLS for abbreviation) inequality
with the Riesz potential $I_{\alpha}$ has been proved on $\mathbb{Z}^{N}$
\cite{HLY15},
\begin{equation}
\sum_{\substack{x,y\in\mathbb{Z}^{N}\\
x\neq y
}
}\frac{f(x)g(y)}{\mid x-y\mid^{N-\alpha}}\leq C_{r,s,\alpha}\Vert f\Vert_{\ell^{r}}\Vert g\Vert_{\ell^{s}},\:\forall f\in\ell^{r}(\mathbb{Z}^{N}),g\in\ell^{s}(\mathbb{Z}^{N}),\label{eq: discrete hls}
\end{equation}
where $r,s>1,$ $0<\alpha<N$, $\frac{1}{r}+\frac{1}{s}+\frac{N-\alpha}{N}=2$.
Since the Riesz potential $I_{\alpha}$ is exactly the Green's function
of the fractional Laplace on $\mathbb{R}^{N}$, $\alpha\in(0,N)$,
it is natural to consider the discrete fractional Laplace $(-\Delta)^{\frac{\alpha}{2}}$
and its Green's function $R_{\alpha}$ on $G$ (see \cite{MCRNN17}
for $\mathbb{Z}^{N}$). The heat kernel $k_{t}(x,y)$ of the Laplace
operator $\Delta$ on $G$ has the Gaussian heat kernel bounds \cite[Theorem 6.19]{B17}.
By the method of subordination and Bochner\textquoteright s functional
calculus \cite{SSV12,BBKRSV80}, the fractional Laplace operator on
$G$ is defined as
\[
(-\Delta)^{\frac{\alpha}{2}}u\coloneqq\sideset{\frac{1}{\mid\varGamma(-\frac{\alpha}{2})\mid}}{_{0}^{\infty}}\int\left(e^{t\Delta}u-u\right)t^{-1-\frac{\alpha}{2}}\text{d}t,
\]
where $e^{t\Delta}$ is the semigroup of $\Delta$, see \cite{KL12}.
And the Green's function $R_{\alpha}$ of the fractional Laplace on
$G$ is
\[
R_{\alpha}(x,y)=\frac{1}{\Gamma(\frac{\alpha}{2})}\int_{0}^{\infty}k_{t}(x,y)t^{-1+\frac{\alpha}{2}}\text{d\ensuremath{t}},\;x,y\in G,
\]
which has the asymptotic relation $R_{\alpha}(x,y)\simeq\left(d^{S}(x,y)\right)^{\alpha-N}$.
By the Young\textquoteright s inequality for weak type spaces \cite[Theorem 1.4.25.]{G14},
we get the discrete HLS inequality on $G$, see Theorem \ref{thm: hls on G},
\begin{equation}
\sum_{\substack{x,y\in G\\
x\neq y
}
}R_{\alpha}(x,y)f(x)g(y)\leq C_{r,s,\alpha}\Vert f\Vert_{\ell^{r}}\Vert g\Vert_{\ell^{s}},\:\forall f\in\ell^{r}(G),g\in\ell^{s}(G),\label{eq: hls1}
\end{equation}
where $r,s>1,$ $0<\alpha<N$, $\frac{1}{r}+\frac{1}{s}+\frac{N-\alpha}{N}=2$,
and an equivalent form of the HLS inequality is

\begin{equation}
\lVert R_{\alpha}\ast f\rVert_{\ell^{t}}\leq C_{r,\alpha}\lVert f\rVert_{\ell^{r}},\:\forall f\in\ell^{r}(G),\label{eq:hls2}
\end{equation}
where $1<r<\frac{N}{\alpha},0<\alpha<N,t=\frac{Nr}{N-\alpha r}$.
Similarly, (\ref{eq: hls1}) and (\ref{eq:hls2}) hold in both critical
and supercritical cases, i.e. $\frac{1}{r}+\frac{1}{s}+\frac{N-\alpha}{N}\geq2$
and $t\geq\frac{Nr}{N-\alpha r}$ respectively. Hence by (\ref{eq: hls1})
and (\ref{eq:hls2}) we obtain the following Sobolev type inequality
whose Euler-Lagrange equation is (\ref{eq: choquard equation}),

\begin{align}
\sum_{G}\left(R_{\alpha}\ast\mid u\mid^{p}\right) & \mid u\mid^{p}\leq C_{p,\alpha}\Vert u\Vert_{\ell^{\frac{2Np}{N+\alpha}}}^{2p}\label{eq: main inequality}\\
 & \leq C_{p,\alpha}\tilde{C}_{\frac{2Np}{N+\alpha}}\lVert u\rVert_{D^{1,2}}^{2p},\;\forall u\in D^{1,2}(G),\nonumber 
\end{align}
where $N\geq3$, $\alpha\in(0,N)$, $p\geq\frac{N+\alpha}{N-2}$,
$C_{p,\alpha}$, $\tilde{C}_{\frac{2Np}{N+\alpha}}$ are constants
in the HLS inequality (\ref{eq: hls1}) and the Sobolev inequality
(\ref{eq:discrete sobo}) respectively. Also we call that (\ref{eq: main inequality})
holds in both critical and supercritical cases.

The optimal constant in the inequality (\ref{eq: main inequality})
is given by 
\begin{equation}
K:=\underset{\lVert u\rVert_{D^{1,2}}=1}{\text{sup}}\sum_{G}\left(R_{\alpha}\ast\mid u\mid^{p}\right)\mid u\mid^{p}.\label{eq: sup}
\end{equation}
In order to prove that the optimal constant is achieved by some $u$,
which is called the extremal function, we consider a maximizing sequence
$\{u_{n}\}\subset D^{1,2}(G)$ satisfying 
\begin{equation}
\lVert u_{n}\rVert_{D^{1,2}}=1,\;\sum_{G}\left(R_{\alpha}\ast\mid u_{n}\mid^{p}\right)\mid u_{n}\mid^{p}\to K,n\to\infty.\label{eq:max seq}
\end{equation}
We want to prove $u_{n}\to u$ strongly in $D^{1,2}(G)$, which yields
that $u$ is a maximizer. 

We prove the following main result.
\begin{thm}
\label{thm:main1}For $N\geq3$, $\alpha\in(0,N)$, $p>\frac{N+\alpha}{N-2}$,
let $\left\{ u_{n}\right\} \subset D^{1,2}(G)$ be a maximizing sequence
satisfying (\ref{eq:max seq}). Then there exists a sequence $\{x_{n}\}\subset G$
and $v\in D^{1,2}(G)$ such that the sequence after translation $\left\{ v_{n}(x):=u_{n}(x_{n}x)\right\} $
contains a convergent subsequence that converges to $v$ in $D^{1,2}(G)$.
And $v$ is a maximizer for $K$. 
\end{thm}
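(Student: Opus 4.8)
The plan is to run a discrete Concentration--Compactness argument on the sequence of gradient measures of $\{u_{n}\}$ and to rule out the two non-compact alternatives using the strict supercriticality $p>\frac{N+\alpha}{N-2}$ and the homogeneity of the functional. First I would associate to each $u_{n}$ the measure $\mu_{n}$ on $G$ defined by $\mu_{n}(x)=\sum_{y\sim x}\lvert\nabla_{xy}u_{n}\rvert^{2}$, so that $\mu_{n}(G)=\lVert u_{n}\rVert_{D^{1,2}}^{2}=1$ and $\{\mu_{n}\}$ is a sequence of probability measures. Applying the discrete analogue of P.~L.~Lions' lemma to $\{\mu_{n}\}$, after passing to a subsequence exactly one of three alternatives holds: \emph{compactness} (there exist $x_{n}\in G$ with the mass of $\mu_{n}$ essentially concentrated in balls $B_{x_{n}}^{S}(R)$ uniformly in $n$), \emph{vanishing} ($\lim_{n}\sup_{x\in G}\mu_{n}(B_{x}^{S}(R))=0$ for every $R$), or \emph{dichotomy} (the mass splits into two pieces whose supports drift infinitely apart). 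The goal is to exclude vanishing and dichotomy, leaving compactness.

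To exclude vanishing I would show that it forces the functional to tend to $0$, contradicting its convergence to $K>0$. Here the strict supercriticality is decisive: writing $q:=\frac{2Np}{N+\alpha}$, the hypothesis $p>\frac{N+\alpha}{N-2}$ gives $q>2^{*}:=\frac{2N}{N-2}$. Vanishing, via a local Sobolev estimate on balls, forces $\lVert u_{n}\rVert_{\ell^{\infty}}\to0$, and then the interpolation
\[
\lVert u_{n}\rVert_{\ell^{q}}^{q}\leq\lVert u_{n}\rVert_{\ell^{\infty}}^{q-2^{*}}\lVert u_{n}\rVert_{\ell^{2^{*}}}^{2^{*}},
\]
together with the boundedness of $\lVert u_{n}\rVert_{\ell^{2^{*}}}$ from the Sobolev inequality (\ref{eq:discrete sobo}), yields $\lVert u_{n}\rVert_{\ell^{q}}\to0$. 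By the Sobolev type inequality (\ref{eq: main inequality}), $\sum_{G}(R_{\alpha}\ast\lvert u_{n}\rvert^{p})\lvert u_{n}\rvert^{p}\leq C\lVert u_{n}\rVert_{\ell^{q}}^{2p}\to0$, contradicting (\ref{eq:max seq}); note it is precisely the strict inequality $q>2^{*}$ that is exploited here.

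To exclude dichotomy I would use the homogeneity of $K$. Since $u\mapsto tu$ scales $\lVert u\rVert_{D^{1,2}}^{2}$ by $t^{2}$ and the functional by $t^{2p}$, the quantity $K(\theta):=\sup\{\sum_{G}(R_{\alpha}\ast\lvert u\rvert^{p})\lvert u\rvert^{p}:\lVert u\rVert_{D^{1,2}}^{2}=\theta\}$ satisfies $K(\theta)=\theta^{p}K$. As $p>1$ this gives the strict subadditivity $K(\lambda)+K(1-\lambda)=(\lambda^{p}+(1-\lambda)^{p})K<K$ for every $\lambda\in(0,1)$. In the dichotomy scenario one writes $u_{n}\approx u_{n}^{1}+u_{n}^{2}$ with $\lVert u_{n}^{1}\rVert_{D^{1,2}}^{2}\to\lambda$, $\lVert u_{n}^{2}\rVert_{D^{1,2}}^{2}\to1-\lambda$ and $d^{S}(\operatorname{supp}u_{n}^{1},\operatorname{supp}u_{n}^{2})\to\infty$; because $R_{\alpha}(x,y)\simeq(d^{S}(x,y))^{\alpha-N}$ decays, the cross terms $\sum_{G}(R_{\alpha}\ast\lvert u_{n}^{1}\rvert^{p})\lvert u_{n}^{2}\rvert^{p}$ vanish in the limit, so the functional splits and is bounded above by $K(\lambda)+K(1-\lambda)<K$, again contradicting (\ref{eq:max seq}).

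Hence compactness holds. Using the left translation $v_{n}(x):=u_{n}(x_{n}x)$, which is a graph automorphism of the Cayley graph and therefore preserves both $\lVert\cdot\rVert_{D^{1,2}}$ and the functional, I recenter the concentrated mass so that $\{v_{n}\}$ is tight, i.e. for every $\varepsilon>0$ there is $R$ with $\sum_{d^{S}(x,e)>R}\sum_{y\sim x}\lvert\nabla_{xy}v_{n}\rvert^{2}<\varepsilon$ uniformly in $n$. The sequence $\{v_{n}\}$ is bounded in the Hilbert space $D^{1,2}(G)$, so after a further subsequence $v_{n}\rightharpoonup v$ weakly, and since finitely supported functionals are continuous on $\ell^{2^{*}}$ this gives $v_{n}(x)\to v(x)$ pointwise. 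Tightness together with pointwise convergence and the HLS inequality (\ref{eq: hls1}) lets me pass the nonlocal functional to the limit, so $\sum_{G}(R_{\alpha}\ast\lvert v_{n}\rvert^{p})\lvert v_{n}\rvert^{p}\to\sum_{G}(R_{\alpha}\ast\lvert v\rvert^{p})\lvert v\rvert^{p}=K$. Since $\lVert v\rVert_{D^{1,2}}^{2}\leq1$ by weak lower semicontinuity, the homogeneity bound gives $K=\sum_{G}(R_{\alpha}\ast\lvert v\rvert^{p})\lvert v\rvert^{p}\leq\lVert v\rVert_{D^{1,2}}^{2p}K\leq K$, forcing $\lVert v\rVert_{D^{1,2}}=1$; in a Hilbert space, $v_{n}\rightharpoonup v$ together with $\lVert v_{n}\rVert_{D^{1,2}}\to\lVert v\rVert_{D^{1,2}}$ upgrades to strong convergence $v_{n}\to v$ in $D^{1,2}(G)$, and $v$ is a maximizer. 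I expect the main obstacle to be the analysis of the nonlocal term: establishing the vanishing lemma on $G$ and, above all, controlling the convolution cross terms in the dichotomy step and the tail of $R_{\alpha}\ast\lvert v_{n}\rvert^{p}$ in the compactness step, where the long-range coupling of the Riesz-type kernel must be tamed using its $(d^{S})^{\alpha-N}$ decay and the HLS/Young estimates.
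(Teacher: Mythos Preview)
Your outline is a correct implementation of the classical Lions trichotomy, but the paper proceeds differently and, in this discrete setting, more economically. The paper never splits into compactness/vanishing/dichotomy. Instead it first fixes the translation \emph{a priori}: by an interpolation on the \emph{gradient} (Lemma~\ref{lem:lower bound}),
\[
C\Big(\sum(R_{\alpha}\ast|u_{n}|^{p})|u_{n}|^{p}\Big)^{q/2p}\le\lVert\nabla u_{n}\rVert_{q}^{q}\le\lVert\nabla u_{n}\rVert_{2}^{2}\,\lVert\nabla u_{n}\rVert_{\infty}^{q-2},
\]
one gets $\varliminf_{n}\lVert\nabla u_{n}\rVert_{\infty}>0$, so translating the maximum of $|\nabla u_{n}|$ to $e$ guarantees directly that the pointwise limit $v$ satisfies $|\nabla v(e)|>0$; this replaces your vanishing analysis entirely. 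Then a single discrete Concentration--Compactness lemma (Lemma~\ref{lem:Concentration-Compact}) gives the exact two-term splittings $1=\lVert v\rVert_{D^{1,2}}^{2}+\mu_{\infty}$ and $K=\sum(R_{\alpha}\ast|v|^{p})|v|^{p}+\nu_{\infty}$ together with $\nu_{\infty}\le K\mu_{\infty}^{p}$; the crucial point, which has no continuous analogue, is that on a graph pointwise convergence kills any ``local concentration'' part, so there is no atomic defect measure and hence no separate dichotomy alternative to rule out. Strict superadditivity of $t\mapsto t^{p}$ then forces $\mu_{\infty}=0$ in one line. The paper's second proof is shorter still: it bypasses concentration--compactness altogether and uses only the Br\'ezis--Lieb identities (Lemmas~\ref{lem: Brezis=002013Lieb lemma-1} and~\ref{lem: Brezis=002013Lieb lemma-2}) plus the same nonvanishing lemma. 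Your route works, but it carries extra baggage---you must actually prove the discrete trichotomy, construct the dichotomy cutoffs, and control the nonlocal cross terms---whereas the paper's argument exploits discreteness to collapse those cases.
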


\begin{rem}
(1) This result implies that the best constant can be obtained in
the supercritical case $p>\frac{N+\alpha}{N-2}$.

(2) Different from the continuous setting, $u\in D^{1,2}(G)\text{\textbackslash}\left\{ 0\right\} $,
does not yield $\sum\left(R_{\alpha}\ast\mid u\mid^{p}\right)\mid u\mid^{p}\neq0$,
for example, $u(e)=1$, $u(x)=0$ for $x\neq e$, where $e$ is the
unit element of $G$. Hence the normalized condition $\lVert u_{n}\rVert_{D^{1,2}}=1$,
rather than $\sum\left(R_{\alpha}\ast\mid u_{n}\mid^{p}\right)\mid u_{n}\mid^{p}=1$,
in the variational problem helps to rule out the vanishing case, see
Lemma \ref{lem:lower bound}.
\end{rem}

We will provide two proofs for the main result. In the continuous
setting, Lions proved the existence of extremal functions by the Concentration-Compactness
principle \cite[Theorem III.3.]{L3} and a rescaling trick \cite[Theorem I.1, (17)]{L3}.
And Lieb in \cite{L5} used a compactness technique and the rearrangement
inequalities. Following Lions, the main idea of proof I is to prove
a discrete analog of Concentration-Compactness principle, see Lemma
\ref{lem:Concentration-Compact}. However, we don't know proper notion
of the rescaling trick on $G$ to exclude the vanishing case of the
limit function. Inspired by \cite{HLY15}, for the supercritical case,
we prove that the normalized maximizing sequence after translation
has a uniform positive lower bound at the unit element, see Lemma
\ref{lem:lower bound}, which excludes the vanishing case. The idea
of proof II is based on a compactness technique by Lieb \cite[Lemma 2.7]{L5}
and the nontrivial nonvanishing of the limit of the translation sequence.
\begin{cor}
\label{cor: Coro}For $N\geq3$, $\alpha\in(0,N)$, $p>\frac{N+\alpha}{N-2}$,
there is a positive ground state solution of the equation 
\begin{equation}
\Delta u(x)+(R_{\alpha}\ast u{}^{p}(x))u^{p-1}(x)=0,\;x\in G,\label{eq: choquard eq2}
\end{equation}
Equivalently, there exists a pair of positive solution $(u,v)=(u,R_{\alpha}\ast u{}^{p})$
of the following system
\begin{equation}
\begin{cases}
\begin{array}{c}
\Delta u+vu^{p-1}=0\\
\left(-\Delta\right)^{\frac{\alpha}{2}}v=u^{p}
\end{array}.\end{cases}\label{eq:R_a system}
\end{equation}
\end{cor}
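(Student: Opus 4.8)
The plan is to derive Corollary \ref{cor: Coro} from the extremal function produced by Theorem \ref{thm:main1}, by writing down the Euler--Lagrange equation of the variational problem \eqref{eq: sup}, fixing the Lagrange multiplier, rescaling, and then upgrading the resulting solution to a positive ground state. Let $v\in D^{1,2}(G)$ be a maximizer for $K$, so $\lVert v\rVert_{D^{1,2}}=1$ and $\sum_{G}(R_{\alpha}\ast\lvert v\rvert^{p})\lvert v\rvert^{p}=K$; note $K>0$ since the double sum in \eqref{eq: hls1} is strictly positive as soon as $v$ is supported on two distinct vertices. The first step is to arrange $v\ge0$: because $\bigl|\,\lvert v(y)\rvert-\lvert v(x)\rvert\,\bigr|\le\lvert v(y)-v(x)\rvert$ we have $\lVert\,\lvert v\rvert\,\rVert_{D^{1,2}}\le\lVert v\rVert_{D^{1,2}}$, while the functional in \eqref{eq: sup} depends only on $\lvert v\rvert$; maximality of $K$ then forces $\lVert\,\lvert v\rvert\,\rVert_{D^{1,2}}=1$, so we may replace $v$ by $\lvert v\rvert$ and assume $v\ge0$ throughout.

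Next I would compute the Euler--Lagrange equation. The functional $\sum_{G}(R_{\alpha}\ast\lvert u\rvert^{p})\lvert u\rvert^{p}$ is $C^{1}$ on $D^{1,2}(G)$ by the HLS inequality \eqref{eq: hls1}, and testing against finitely supported $\phi$ (which on a graph yields pointwise identities) with the summation by parts formula $\sum_{x}u(x)\bigl(-\Delta u(x)\bigr)=\tfrac12\lVert u\rVert_{D^{1,2}}^{2}$, the Lagrange multiplier rule for the constraint $\lVert u\rVert_{D^{1,2}}^{2}=1$ gives a number $\mu$ with
\[
2p\,(R_{\alpha}\ast v^{p})\,v^{p-1}=-4\mu\,\Delta v .
\]
Testing this identity against $v$ and using $\sum_{G}(R_{\alpha}\ast v^{p})v^{p}=K$ and $\sum_{x}v(-\Delta v)=\tfrac12$ pins down $\mu=pK>0$, so $v$ solves $\Delta v+\tfrac{1}{2K}(R_{\alpha}\ast v^{p})v^{p-1}=0$. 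Setting $u:=c\,v$ with $c:=(2K)^{-1/(2(p-1))}$ and using the homogeneity $(R_{\alpha}\ast(cv)^{p})(cv)^{p-1}=c^{2p-1}(R_{\alpha}\ast v^{p})v^{p-1}$, a one-line computation turns this into exactly \eqref{eq: choquard eq2}. Strict positivity of $u$ then follows from the minimum principle on the connected graph $G$: at any vertex $x_{0}$ with $u(x_{0})=0$ the equation gives $\Delta u(x_{0})=-(R_{\alpha}\ast u^{p})(x_{0})\,u(x_{0})^{p-1}=0$, hence $\sum_{y\sim x_{0}}u(y)=0$ with all terms nonnegative, forcing $u\equiv0$ along edges by connectedness, contradicting $\lVert u\rVert_{D^{1,2}}\neq0$.

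The main obstacle is the ground-state (least-action) property; I would handle it with a Nehari-type identity combined with the sharp constant $K$. Testing \eqref{eq: choquard eq2} against any nontrivial solution $w$ yields $\lVert w\rVert_{D^{1,2}}^{2}=2\sum_{G}(R_{\alpha}\ast\lvert w\rvert^{p})\lvert w\rvert^{p}$, whence the action $I(w)=\tfrac12\lVert w\rVert_{D^{1,2}}^{2}-\tfrac{1}{2p}\sum_{G}(R_{\alpha}\ast\lvert w\rvert^{p})\lvert w\rvert^{p}=\tfrac{2p-1}{4p}\lVert w\rVert_{D^{1,2}}^{2}$ is an increasing function of $\lVert w\rVert_{D^{1,2}}^{2}$. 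Feeding $w/\lVert w\rVert_{D^{1,2}}$ into the definition \eqref{eq: sup} gives $K\ge\tfrac12\lVert w\rVert_{D^{1,2}}^{2-2p}$, i.e. $\lVert w\rVert_{D^{1,2}}^{2}\ge(2K)^{-1/(p-1)}$ for every nontrivial solution, while our $u$ satisfies $\lVert u\rVert_{D^{1,2}}^{2}=c^{2}=(2K)^{-1/(p-1)}$, attaining this bound; hence $u$ has least action and is a ground state. Finally, the equivalence with \eqref{eq:R_a system} is immediate: setting $v:=R_{\alpha}\ast u^{p}$ and using that $R_{\alpha}$ is the Green's function of $(-\Delta)^{\alpha/2}$ gives $(-\Delta)^{\alpha/2}v=u^{p}$, the first equation of \eqref{eq:R_a system} is precisely \eqref{eq: choquard eq2}, and $v>0$ since $u>0$ and $R_{\alpha}>0$.
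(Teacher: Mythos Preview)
Your argument is correct and follows the same route as the paper's proof: take the maximizer from Theorem \ref{thm:main1}, pass to $\lvert v\rvert$, read off the Euler--Lagrange equation via a Lagrange multiplier, rescale, and apply the (minimum) maximum principle to get strict positivity, then set $v=R_{\alpha}\ast u^{p}$ for the system. The paper's proof is considerably terser and does not spell out the multiplier computation, the rescaling, or---most notably---the verification that the resulting solution is actually a \emph{ground state}; your Nehari-type argument (testing the equation against an arbitrary solution $w$ to obtain $\lVert w\rVert_{D^{1,2}}^{2}=2\sum(R_{\alpha}\ast\lvert w\rvert^{p})\lvert w\rvert^{p}$ and then comparing with the sharp constant $K$) fills a genuine gap in the paper's presentation.
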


\begin{rem}
(1) \label{rem:equivalent form of HLS}By the equivalent form of the
HLS inequality (\ref{eq:hls2}) we obtain another inequality 
\[
\lVert R_{\alpha}\ast\mid u\mid^{p}\lVert_{^{\frac{2N}{N-\alpha}}}\lesssim\lVert u{}^{p}\lVert_{^{\frac{2N}{N+\alpha}}}\lesssim\lVert u\rVert_{D^{1,2}}^{p},\;\forall u\in D^{1,2}(G),
\]
here we omit the constants in inequalities. Similarly, there exist
positive solutions $\bar{u}$ and $(\bar{u},\bar{v})=(\bar{u},R_{\alpha}\ast\bar{u}{}^{p})$
of the following equation and system respectively,
\begin{equation}
\Delta u+(R_{\alpha}\ast u{}^{p})^{\frac{N+\alpha}{N-\alpha}}u^{p-1}=0,\label{eq:R_a 2}
\end{equation}
\begin{equation}
\begin{cases}
\begin{array}{c}
\Delta u+v^{\frac{N+\alpha}{N-\alpha}}u^{p-1}=0\\
\left(-\Delta\right)^{\frac{\alpha}{2}}v=u^{p}
\end{array} & .\end{cases}\label{eq:R_a system 2}
\end{equation}

(2) By the discrete HLS inequality (\ref{eq: discrete hls}) on $\mathbb{Z}^{N}$,
the above existence results can be obtained on $\mathbb{Z}^{N}$ when
replacing $R_{\alpha}$ with $I_{\alpha}$ in (\ref{eq: choquard eq2}),
(\ref{eq:R_a system}), (\ref{eq:R_a 2}) and (\ref{eq:R_a system 2}).

(3) The range of the parameter for the existence of a positive ground
state solution $p>\frac{N+\alpha}{N-2}$ is different from the continuous
case $\frac{N+\alpha}{N}<p<\frac{N+\alpha}{N-2}$, thanks to the discrete
nature, that is, $\ell^{p}(G)$ embeds into $\ell^{q}(G)$ for any
$q>p$. Hence we don't need the control of $L^{2}$ norm in the right
hand side of (\ref{eq: main inequality}), and we can remove the $u$
term in (\ref{eq: choquard eq2}) on $G$.

(4) In the continuous setting \cite{MV13}, the existence range of
ground state solutions $\frac{N+\alpha}{N}<p<\frac{N+\alpha}{N-2}$
is sharp, in the sense that if $p\leq\frac{N+\alpha}{N}$ or $p\geq\frac{N+\alpha}{N-2}$
problem (\ref{eq: classical choquard equation}) does not have any
nontrivial variational solution. It follows from the Pohozaev identity
and the scaling trick on the Euclidean space $\mathbb{R}^{N}$, see
\cite[Theorem 2,  Proposition 3.1.]{MV13}, which are unknown on Cayley
graphs $G$. This leads to an open problem for the nonexistence of
nontrivial solutions to the equation (\ref{eq: choquard eq2}) on
$G$, see Conjecture \ref{conj: conj3}.

(5) Since the nonvanishing of the limit of the translation sequence
based on the fact that the parameter $p>\frac{N+\alpha}{N-2}$ is
supercritical, see Lemma \ref{lem:lower bound}, the existence of
nontrivial solutions in the critical case $p=\frac{N+\alpha}{N-2}$
is still an open problem, see Conjecture \ref{conj: conj3}.
\end{rem}

\begin{conjecture}
\label{conj: conj3}According to the results in continuous cases \cite[Corollary I.2]{L3}\cite{MV13},
we conjecture that (\ref{eq: choquard eq2}), (\ref{eq:R_a system}),
(\ref{eq:R_a 2}) and (\ref{eq:R_a system 2}) have positive solutions
when $p=\frac{N+\alpha}{N-2}$, and the non-negative solutions are
trivial when $p<\frac{N+\alpha}{N-2}$.
\end{conjecture}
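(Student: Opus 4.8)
The conjecture comprises two essentially independent claims, and I would attack them with different tools. The first is existence of a positive solution at the critical exponent $p=\frac{N+\alpha}{N-2}$; the second is triviality of non-negative solutions throughout the subcritical range $p<\frac{N+\alpha}{N-2}$. In both cases the guiding principle is to transplant the continuous arguments of \cite{L3,MV13} to $G$, replacing their Euclidean scaling by the discrete machinery already built in this paper: the Hardy--Littlewood--Sobolev inequality (\ref{eq: hls1}), the asymptotics $R_{\alpha}(x,y)\simeq d^{S}(x,y)^{\alpha-N}$, and the discrete Concentration--Compactness principle (Lemma \ref{lem:Concentration-Compact}).

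For the critical existence I would study the extremal problem $K_{c}:=\sup\{\sum_{G}(R_{\alpha}\ast\lvert u\rvert^{p})\lvert u\rvert^{p}:\lVert u\rVert_{D^{1,2}}=1\}$ at $p=\frac{N+\alpha}{N-2}$ and try to produce a maximizer, which then yields a positive solution verbatim as in Theorem \ref{thm:main1} and Corollary \ref{cor: Coro}. A maximizing sequence, recentered by translation, is fed into Lemma \ref{lem:Concentration-Compact}; as in Lions's scheme both dichotomy and vanishing must be excluded by strict subadditivity, which at the scale-invariant exponent is the delicate point. The genuine difficulty is vanishing: in the supercritical regime this rested on the uniform lower bound at the unit element (Lemma \ref{lem:lower bound}), whose proof uses $p>\frac{N+\alpha}{N-2}$ in an essential way and simply breaks down at the endpoint. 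My plan is to replace it by a strict comparison $K_{c}>K_{c}^{\infty}$ between the discrete best constant and the best constant of the corresponding critical inequality on the asymptotic cone of $G$ (which is $\mathbb{R}^{N}$ for $G=\mathbb{Z}^{N}$); since a vanishing sequence spreads to the continuum scale and can realize no more than $K_{c}^{\infty}$, such a strict gap would force compactness. Producing a discrete competitor --- a lattice sample of a continuous extremal, suitably perturbed near the origin --- that witnesses $K_{c}>K_{c}^{\infty}$ is, I expect, the crux of this half. If it resists, a fallback is to pass to the limit $p\downarrow\frac{N+\alpha}{N-2}$ in the supercritical solutions furnished by Corollary \ref{cor: Coro}, the obstacle there being uniform a priori bounds together with the non-vanishing of the limit.

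For the subcritical triviality I would pursue a Pohozaev route, but through an \emph{inequality} rather than an identity. Testing (\ref{eq: choquard eq2}) against $u$ and summing by parts already gives the energy identity $\tfrac12\lVert u\rVert_{D^{1,2}}^{2}=\sum_{G}(R_{\alpha}\ast u^{p})u^{p}$. In the continuum the second, scaling, identity $\frac{N-2}{2}\int\lvert\nabla u\rvert^{2}=\frac{N+\alpha}{2p}\int(I_{\alpha}\ast u^{p})u^{p}$ combines with the energy identity to force $p=\frac{N+\alpha}{N-2}$ exactly, which on a lattice would wrongly exclude the supercritical solutions we have already constructed. The resolution I propose is that the correct discrete statement is a one-sided Pohozaev inequality $\frac{N-2}{2}\lVert u\rVert_{D^{1,2}}^{2}\gtrsim\frac{N+\alpha}{2p}\sum_{G}(R_{\alpha}\ast u^{p})u^{p}$, whose discretization error carries a definite, favorable sign; fed into the energy identity it yields $p\geq\frac{N+\alpha}{N-2}$, so that any non-trivial non-negative solution is supercritical and the subcritical case collapses to $u\equiv0$. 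This is exactly the direction compatible with the known supercritical existence, which both motivates the sign and sharply constrains what must be proved.

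To manufacture this inequality I would take as multiplier a discrete position field based on the word metric, $r(x):=d^{S}(e,x)$ (on $\mathbb{Z}^{N}$ the cleaner coordinatewise multiplier $\sum_{i}x_{i}\nabla_{i}u$ is available, and I would test that case first), pair the equation with this multiplier, and sum by parts. The commutators measuring the failure of $r$ to commute with $\Delta$, together with the analogous nonlocal error from $R_{\alpha}$, are precisely the discrepancy between the discrete and continuous identities, and the whole problem reduces to showing that these error terms have the favorable sign, or are lower order with a controllable sign. This is the main obstacle of the conjecture --- the very discrete Pohozaev/scaling phenomenon the paper flags as unknown on Cayley graphs --- and controlling the nonlocal error through $R_{\alpha}(x,y)\simeq d^{S}(x,y)^{\alpha-N}$ and the heat-kernel representation is where the technical weight of the argument will lie.
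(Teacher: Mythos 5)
There is a fundamental mismatch here: the statement you are addressing is a \emph{conjecture}, and the paper offers no proof of it --- indeed, Remark (4) and (5) following Corollary \ref{cor: Coro} explain precisely why both halves are open (the Pohozaev identity and the scaling trick, which drive the continuous arguments of \cite{L3,MV13}, are unknown on Cayley graphs, and the non-vanishing Lemma \ref{lem:lower bound} uses supercriticality in an essential way). Your proposal is therefore not being measured against a hidden proof; it must stand on its own as a complete argument, and it does not. What you have written is a reasonable research program, but both of its pivotal ingredients are exactly the open difficulties, and you yourself leave them unestablished: the strict gap $K_{c}>K_{c}^{\infty}$ is asserted with the competitor construction admitted to be ``the crux,'' and the one-sided discrete Pohozaev inequality is postulated with a ``favorable sign'' for which no computation or mechanism is given.

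Beyond incompleteness, several steps as stated would need serious repair. For the critical case, the quantity $K_{c}^{\infty}$ is not even well defined for a general group of polynomial growth: the asymptotic cone is a Carnot group with a Carnot--Carath\'eodory metric, not $\mathbb{R}^{N}$, and the claim that a vanishing (spreading) discrete sequence ``can realize no more than $K_{c}^{\infty}$'' is itself a nontrivial discrete-to-continuum statement with no scaling structure available on $G$ to prove it. Your fallback of letting $p\downarrow\frac{N+\alpha}{N-2}$ in the solutions of Corollary \ref{cor: Coro} runs into the same degeneration you identified: in Lemma \ref{lem:lower bound} the interpolation window $2<q<r$ with $r=\frac{2Np}{N+\alpha+2p}$ collapses ($r\to2$) exactly at the critical exponent, so the lower bound, and with it non-vanishing, is lost in the limit without a new uniform estimate. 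For the subcritical case, the sign of the commutator errors produced by the multiplier $d^{S}(e,x)$ (or $\sum_{i}x_{i}\nabla_{i}u$ on $\mathbb{Z}^{N}$) is the entire content of the claim; nothing in the paper's toolkit (the HLS inequality (\ref{eq: hls1}), the asymptotics of $R_{\alpha}$, or Lemma \ref{lem:Concentration-Compact}) supplies it, and the fact that the discrete error must simultaneously permit supercritical existence and forbid subcritical existence shows it cannot be a soft or generic sign argument. In short: the conjecture remains open, and your proposal, while pointed in a sensible direction, contains no proof of either half.
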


According to \cite{HW,C3}, we define the $p$-Laplace of $u$ for
$p>1$ as
\[
\Delta_{p}u(x)\coloneqq\underset{y\sim x}{\sum}\lvert\nabla_{xy}u\rvert^{p-2}\nabla_{xy}u,
\]
and for $p=1$ as
\[
\Delta_{1}u(x)\coloneqq\left\{ \sum\limits _{y\sim x}f_{xy}:f_{xy}=-f_{yx},f_{xy}\in\textrm{Sgn}(\nabla_{xy}u)\right\} \text{, Sgn\ensuremath{\left(t\right)}=\ensuremath{\begin{cases}
\begin{array}{c}
\left\{ 1\right\} ,\\{}
[-1,1]\\
\left\{ -1\right\} ,
\end{array}, & \begin{array}{c}
t>0,\\
t=0,\\
t<0.
\end{array}\end{cases}}}
\]
And since the second-order Sobolev inequalities hold on $G$ \cite[Theorem 10]{HLM22},
by the same argument we can prove the following results.
\begin{thm}
\label{thm: main2}For $N\geq3$, $\alpha\in(0,N-2)$, $1\leq p<\frac{N-\alpha}{2}$,
there exists a positive ground state solution for $p>1$ and a non-negative
solution for $p=1$ of the equation 
\begin{equation}
\Delta_{p}u+(R_{\alpha}\ast u{}^{p})u^{p-1}=0.\label{eq: p-laplace}
\end{equation}

For $N\geq5$, $\alpha\in(0,N)$, $p>\frac{N+\alpha}{N-4}$, there
exists a positive ground state solution of the equation
\begin{equation}
\Delta^{2}u-(R_{\alpha}\ast u{}^{p})u^{p-1}=0.\label{eq: bi-harmonic}
\end{equation}

For $N\geq5$, $\alpha\in(0,N-4)$, $1<p<\frac{N-\alpha}{4}$, there
exists a positive ground state solution of the equation
\begin{equation}
\Delta\left(\mid\Delta u\mid^{p-2}\Delta u\right)-(R_{\alpha}\ast u{}^{p})u^{p-1}=0.\label{eq: p-bi-harmonic}
\end{equation}
\end{thm}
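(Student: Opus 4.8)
The plan is to reduce each of the three cases to the variational scheme already developed for Theorem~\ref{thm:main1} and Corollary~\ref{cor: Coro}, replacing the space $D^{1,2}(G)$ and the inequality (\ref{eq: main inequality}) by the energy space and Sobolev-type inequality adapted to the operator at hand. For a generic operator with associated norm $\lVert\cdot\rVert_{X}$ -- here $X=D^{1,p}$ for the $p$-Laplace operator, $X=D^{2,2}$ for the biharmonic operator, and $X=D^{2,p}$ for the $p$-biharmonic operator -- I would introduce the optimal constant
\[
K_{X}:=\sup\Bigl\{\sum_{G}\bigl(R_{\alpha}\ast\lvert u\rvert^{p}\bigr)\lvert u\rvert^{p}:\lVert u\rVert_{X}=1\Bigr\},
\]
whose Euler--Lagrange equation is the corresponding equation in the statement. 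The first task is to show $K_{X}<\infty$, that is, to establish the relevant Sobolev-type inequality, by chaining the HLS inequality (\ref{eq: hls1}) with $f=g=\lvert u\rvert^{p}$ -- which always yields $\sum_{G}(R_{\alpha}\ast\lvert u\rvert^{p})\lvert u\rvert^{p}\leq C_{p,\alpha}\lVert u\rVert_{\ell^{2Np/(N+\alpha)}}^{2p}$ for $\alpha\in(0,N)$ -- with the appropriate Sobolev embedding $\lVert u\rVert_{\ell^{2Np/(N+\alpha)}}\lesssim\lVert u\rVert_{X}$.

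The stated parameter ranges come precisely from requiring the target exponent $\frac{2Np}{N+\alpha}$ to lie at or above the relevant critical exponent. For the $p$-Laplace operator the first-order Sobolev inequality (\ref{eq:discrete sobo}) gives $\lVert u\rVert_{\ell^{q}}\lesssim\lVert u\rVert_{D^{1,p}}$ for every $q\geq p^{\ast}=\frac{Np}{N-p}$, so the requirement $\frac{2Np}{N+\alpha}\geq\frac{Np}{N-p}$ is equivalent to $p\leq\frac{N-\alpha}{2}$, and nonemptiness of the range forces $\alpha<N-2$, which is exactly the hypothesis $\alpha\in(0,N-2)$, $1\leq p<\frac{N-\alpha}{2}$. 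For the biharmonic operator the second-order Sobolev inequality \cite[Theorem 10]{HLM22} gives $\lVert u\rVert_{\ell^{q}}\lesssim\lVert\Delta u\rVert_{\ell^{2}}$ for $q\geq\frac{2N}{N-4}$ (requiring $N\geq5$), and the condition $\frac{2Np}{N+\alpha}\geq\frac{2N}{N-4}$ reads $p\geq\frac{N+\alpha}{N-4}$. For the $p$-biharmonic operator the same second-order inequality gives $\lVert u\rVert_{\ell^{q}}\lesssim\lVert\Delta u\rVert_{\ell^{p}}$ for $q\geq\frac{Np}{N-2p}$, and $\frac{2Np}{N+\alpha}\geq\frac{Np}{N-2p}$ becomes $p\leq\frac{N-\alpha}{4}$, with $\alpha<N-4$ ensuring the range is nonempty; this matches $N\geq5$, $\alpha\in(0,N-4)$, $1<p<\frac{N-\alpha}{4}$. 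In each case the supercritical strict inequality on $p$ is what will later rule out vanishing.

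With finiteness of $K_{X}$ in hand, I would take a maximizing sequence $\{u_{n}\}$ with $\lVert u_{n}\rVert_{X}=1$ and run the discrete Concentration-Compactness principle (Lemma~\ref{lem:Concentration-Compact}) on the sequence of measures built from the energy density $\lvert\nabla u_{n}\rvert^{p}$ or $\lvert\Delta u_{n}\rvert^{p}$. Dichotomy is excluded by the strict subadditivity of the supremum under splitting, and vanishing by the translated uniform positive lower bound in the spirit of Lemma~\ref{lem:lower bound}, which uses only supercriticality together with the embedding $\ell^{p}(G)\hookrightarrow\ell^{q}(G)$; this produces, after translation by a suitable $x_{n}\in G$, a nonzero maximizer $v$. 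A standard first variation then shows that $v$ solves the Euler--Lagrange equation up to a positive multiplier, which is absorbed by the scaling $v\mapsto\lambda v$, giving a genuine solution of (\ref{eq: p-laplace}), (\ref{eq: bi-harmonic}), or (\ref{eq: p-bi-harmonic}).

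Finally, positivity. Since the functional depends only on $\lvert u\rvert$, I may assume $v\geq0$. For (\ref{eq: p-laplace}) with $p>1$, at any $x_{0}$ with $v(x_{0})=0$ the equation forces $\Delta_{p}v(x_{0})=0$, i.e. $\sum_{y\sim x_{0}}\lvert\nabla_{x_{0}y}v\rvert^{p-2}\nabla_{x_{0}y}v=0$; together with $v\geq0$ and the connectedness of $G$ this gives $v\equiv0$ by a discrete strong maximum principle, so $v>0$ everywhere. For (\ref{eq: bi-harmonic}) I would invert $\Delta^{2}=(-\Delta)^{2}$ using the strictly positive Green's function $R_{2}$, writing $v=R_{2}\ast R_{2}\ast\bigl[(R_{\alpha}\ast v^{p})v^{p-1}\bigr]>0$ since the bracket is nonnegative and not identically zero, and the $p$-biharmonic case (\ref{eq: p-bi-harmonic}) is handled by the analogous combination of the maximum principle and positivity of the Green's function. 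The endpoint $p=1$ of the $p$-Laplace operator yields only a non-negative solution: there $u^{p-1}\equiv1$ does not vanish where $v=0$, so the maximum-principle step breaks down, and moreover $\Delta_{1}$ is set-valued, so (\ref{eq: p-laplace}) is to be read as a differential inclusion. The step I expect to be the main obstacle is not any single case but the verification that the Concentration-Compactness and lower-bound lemmas, proved for $D^{1,2}$, transfer to the higher-order and nonlinear energy densities -- in particular checking the subadditivity and scaling behaviour of $\lvert\Delta u\rvert^{p}$ under the graph splitting used in the dichotomy alternative.
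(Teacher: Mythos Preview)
Your variational scheme, the derivation of the parameter ranges, and the translation/lower-bound argument to rule out vanishing are all correct and match the paper's approach (the paper actually writes out Proof~II via Br\'ezis--Lieb rather than the Concentration-Compactness route you sketch, but it explicitly notes that either works).

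There is, however, a genuine gap in your positivity argument for the second-order cases (\ref{eq: bi-harmonic}) and (\ref{eq: p-bi-harmonic}). You write ``Since the functional depends only on $\lvert u\rvert$, I may assume $v\geq0$.'' This is fine for $D^{1,p}$, because $\bigl|\,|u(y)|-|u(x)|\,\bigr|\leq |u(y)-u(x)|$ gives $\lVert\,|u|\,\rVert_{D^{1,p}}\leq\lVert u\rVert_{D^{1,p}}$. But it fails for $D^{2,p}$: there is no inequality $\lVert\Delta|u|\rVert_{p}\leq\lVert\Delta u\rVert_{p}$ on a graph. For instance, if $u(x_{0})=0$ and $u$ takes values $\pm1$ on neighbours of $x_{0}$ with cancellation, then $\Delta u(x_{0})=0$ while $\Delta|u|(x_{0})>0$. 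So replacing a maximizer $u$ by $|u|$ may \emph{increase} the constraint $\lVert\Delta u\rVert_{p}$, and $|u|$ need not be a maximizer. Your subsequent Green's-function representation $v=R_{2}\ast R_{2}\ast[(R_{\alpha}\ast v^{p})v^{p-1}]$ is perfectly valid \emph{once} $v\geq0$ is known, but that is exactly what is missing.

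The paper's fix is a different construction: given a maximizer $u$ with $\lVert\Delta u\rVert_{p}=1$, solve $-\Delta v=|{-}\Delta u|$ (using that $\Delta\colon D^{2,p}\to\ell^{p}$ is an isomorphism). Then $\lVert\Delta v\rVert_{p}=\lVert\Delta u\rVert_{p}=1$ by construction, and the maximum principle applied to $-\Delta(v\pm u)=|\Delta u|\pm\Delta u\geq0$ gives $v\geq|u|\geq0$, whence $\sum(R_{\alpha}\ast v^{p})v^{p}\geq\sum(R_{\alpha}\ast|u|^{p})|u|^{p}$. Thus $v$ is a non-negative maximizer, and \emph{then} the strong maximum principle (or your Green's-function argument) upgrades this to $v>0$. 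You should insert this step for both (\ref{eq: bi-harmonic}) and (\ref{eq: p-bi-harmonic}).
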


\begin{rem}
\label{rem:generalize 2}The systems of (\ref{eq: p-laplace}), (\ref{eq: bi-harmonic})
and (\ref{eq: p-bi-harmonic}) also have positive solutions.
\end{rem}

\begin{conjecture}
Also we conjecture that (\ref{eq: p-laplace}), (\ref{eq: bi-harmonic})
and (\ref{eq: p-bi-harmonic}) have positive solutions when $p=\frac{N-\alpha}{2}$,
$p=\frac{N+\alpha}{N-4}$ and $p=\frac{N-\alpha}{4}$ respectively,
and the non-negative solutions are trivial when $p>\frac{N-\alpha}{2}$,
$p<\frac{N+\alpha}{N-4}$ and $p>\frac{N-\alpha}{4}$ respectively.
\end{conjecture}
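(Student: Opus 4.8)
The strategy is to realise each of (\ref{eq: p-laplace}), (\ref{eq: bi-harmonic}) and (\ref{eq: p-bi-harmonic}) as the Euler--Lagrange equation of a constrained maximisation problem of exactly the same shape as (\ref{eq: sup}), with the energy space $D^{1,2}(G)$ replaced by $D_{0}^{1,p}(G)$, $D_{0}^{2,2}(G)$ and $D_{0}^{2,p}(G)$ respectively. The first task is to record, for each operator, the Sobolev-type inequality that plays the role of (\ref{eq: main inequality}); it is obtained by chaining the HLS inequality (\ref{eq: hls1}) with a single embedding, namely (\ref{eq:discrete sobo}) for the $p$-Laplace and the second-order Sobolev inequalities of \cite[Theorem 10]{HLM22} for the two fourth-order problems. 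In all three cases HLS gives $\sum_{G}(R_{\alpha}\ast|u|^{p})|u|^{p}\leq C\lVert u\rVert_{\ell^{2Np/(N+\alpha)}}^{2p}$, and the remaining embedding of $\ell^{2Np/(N+\alpha)}$ into the critical Lebesgue space attached to the relevant Dirichlet space holds precisely under the stated exponent constraints: a direct computation shows $2Np/(N+\alpha)\geq Np/(N-p)$ iff $p\leq(N-\alpha)/2$; $2Np/(N+\alpha)\geq 2N/(N-4)$ iff $p\geq(N+\alpha)/(N-4)$; and $2Np/(N+\alpha)\geq Np/(N-2p)$ iff $p\leq(N-\alpha)/4$. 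Thus the hypotheses on $p$ place us in each case exactly in the supercritical regime, where $\ell^{p}\hookrightarrow\ell^{q}$ for $q>p$ is available.

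With the appropriate inequality fixed, I define the optimal constant $K$ as the supremum of $\sum_{G}(R_{\alpha}\ast|u|^{p})|u|^{p}$ over the unit sphere of the corresponding Dirichlet space, choose a normalised maximising sequence $\{u_{n}\}$ as in (\ref{eq:max seq}), and then repeat verbatim the argument of Theorem \ref{thm:main1}. The discrete Concentration--Compactness principle (Lemma \ref{lem:Concentration-Compact}) splits the behaviour of the sequence into compactness, vanishing and dichotomy; dichotomy is ruled out by the strict subadditivity coming from the $2p$-homogeneity of the functional, while vanishing is ruled out exactly as in Lemma \ref{lem:lower bound}, where translating by a suitable $\{x_{n}\}\subset G$ and using supercriticality yields a uniform positive lower bound for $|v_{n}|$ at the unit element $e$. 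Compactness then produces a maximiser $v$, and writing the Lagrange multiplier condition gives, after the homogeneity rescaling $v\mapsto cv$ that normalises the multiplier, exactly (\ref{eq: p-laplace}), (\ref{eq: bi-harmonic}) or (\ref{eq: p-bi-harmonic}); the corresponding systems of Remark \ref{rem:generalize 2} follow by setting $\bar v=R_{\alpha}\ast|v|^{p}$.

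The main obstacle is not the abstract scheme but its two degenerate endpoints. First, positivity of the ground state is immediate for the $p$-Laplace problem, since the functional depends only on $|u|$ and $\lVert\,|u|\,\rVert_{D^{1,p}}\leq\lVert u\rVert_{D^{1,p}}$ (because $|\nabla_{xy}|u||\leq|\nabla_{xy}u|$), so one may take $v\geq0$ and conclude as in Corollary \ref{cor: Coro} by a discrete strong maximum/comparison principle; but for the fourth-order operators the replacement $u\mapsto|u|$ need not decrease $\lVert\Delta u\rVert_{\ell^{p}}$, so positivity must instead be extracted from the equation itself, equivalently from the system via $\bar v=R_{\alpha}\ast|v|^{p}>0$ together with a comparison argument for $\Delta^{2}$ and for $\Delta(|\Delta v|^{p-2}\Delta v)$, and this is the genuinely delicate point. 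Second, the case $p=1$ of (\ref{eq: p-laplace}) requires the multivalued operator $\Delta_{1}$: here the space $D_{0}^{1,1}(G)$ and the non-smooth functional force one to work with a subdifferential inclusion rather than a pointwise equation, so one obtains only a nonnegative solution in the inclusion sense (consistent with the statement), for instance by passing to the limit $p\to1^{+}$ in the solutions just constructed and invoking lower semicontinuity together with the uniform lower bound to prevent collapse.
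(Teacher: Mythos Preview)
You have misread the statement. This is a \emph{conjecture}, not a theorem, and the paper offers no proof of it. The conjecture concerns precisely the two regimes that Theorem~\ref{thm: main2} does \emph{not} cover: the \emph{critical} exponents $p=\tfrac{N-\alpha}{2}$, $p=\tfrac{N+\alpha}{N-4}$, $p=\tfrac{N-\alpha}{4}$ (existence), and the complementary ranges $p>\tfrac{N-\alpha}{2}$, $p<\tfrac{N+\alpha}{N-4}$, $p>\tfrac{N-\alpha}{4}$ (triviality of nonnegative solutions). Your write-up instead reproduces the scheme used to prove Theorem~\ref{thm: main2} in the strictly supercritical range, and then asserts that ``the hypotheses on $p$ place us in each case exactly in the supercritical regime''; at the values in the conjecture this is false: your own computations give equality $2Np/(N+\alpha)=p^{\ast}$, $2Np/(N+\alpha)=2^{\ast\ast}$, $2Np/(N+\alpha)=p^{\ast\ast}$, i.e.\ the critical case.

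This matters at the one step where supercriticality is actually used. In Lemma~\ref{lem:lower bound} (and its analogues in Section~5) one needs an exponent $q$ with $p<q<r$ (resp.\ $2<q<r$) to interpolate and force $\varliminf\lVert\nabla u_n\rVert_\infty>0$. At the critical exponent $r$ collapses to $p$ (resp.\ to $2$), so no such $q$ exists and the argument ruling out vanishing breaks down; this is exactly why the paper singles out the critical case as open (cf.\ Remark~\ref{rem:equivalent form of HLS}(5) and Conjecture~\ref{conj: conj3}). Finally, your proposal says nothing at all about the second half of the conjecture, namely that nonnegative solutions are trivial in the remaining ranges; in the continuous setting this comes from Pohozaev-type identities and scaling, tools which, as the paper notes, are unavailable on $G$. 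In short, what you have outlined is (a sketch of) the proof of Theorem~\ref{thm: main2}, not a proof of the conjecture.
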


The paper is organized as follows. In Section 2, we recall some basic
facts and prove some useful lemmas. In Section 3, we prove the discrete
Concentration-Compactness principle and a key lemma to exclude the
vanishing case, see Lemma \ref{lem:Concentration-Compact} and Lemma
\ref{lem:lower bound}. In Section 4, we give two proofs for Theorem
\ref{thm:main1} and prove Corollary \ref{cor: Coro}. In Section
5, we prove Theorem \ref{thm: main2} which generalize the existence
results to the Choquard type equations with $p$-Laplace, biharmonic
and $p$-biharmonic operators.

\section{Preliminaries}

Let $G$ be a countable group. It is called finitely generated if
it has a finite generating set $S$. We always assume that the generating
set $S$ is symmetric, i.e. $S=S^{-1}$. The Cayley graph of $(G,S)$
is a graph structure $(V,E)$ with the set of vertices $V=G$ and
the set of edges $E$ where for any $x,y\text{\ensuremath{\in}}G,xy\in E$
(also denoted by $x\sim y$) if $x=ys$ for some $s\in S$. The Cayley
graph of $(G,S)$ is endowed with a natural metric, called the word
metric \cite{BBI01}: For any $x,y\in G$, the distance between them
is defined as the length of the shortest path connecting $x$ and
$y$ by assigning each edge of length one, 
\[
d^{S}(x,y)=\text{inf}{\left\{ k:x=x_{0}\sim\cdot\cdot\cdot\sim x_{k}=y\right\} .}
\]
One sees easily that for two generating sets $S$ and $S_{1}$ the
metrics $d^{S}$ and $d^{S_{1}}$ are bi-Lipschitz equivalent, i.e.
there exist two constants $C_{1}(S,S_{1}),C_{2}(S,S_{1})$ such that
for any $x,y\in G$ 
\[
C_{1}(S,S_{1})d^{S_{1}}(x,y)\leq d^{S}(x,y)\leq C_{2}(S,S_{1})d^{S_{1}}(x,y).
\]
Let $B_{p}^{S}(n):={\left\{ x\in G\mid d^{S}(p,x)\leq n\right\} }$
denote the closed ball of radius $n$ centered at $p\in G$. By the
group structure, it is obvious that $\mid B_{p}^{S}(n)\mid=\mid B_{q}^{S}(n)\mid$,
for any $p,q\in G$. The growth function of $(G,S)$ is defined as
$\beta_{S}(n):=\mid B_{e}^{S}(n)\mid$ where $e$ is the unit element
of $G$. A group $G$ is called of polynomial growth if there exists
a finite generating set $S$ such that $\beta_{S}(n)\leq Cn^{A}$
for some $C,A>0$ and any $n\geq1$. One checks that this definition
is independent of the choice of the generating set $S$. Thus, the
polynomial growth is indeed a property of the group $G$. In this
paper, we consider the Cayley graph $(G,S)$ of a group of polynomial
growth 
\[
C_{1}(S)n^{N}\leq\beta_{S}(n)\leq C_{2}(S)n^{N},
\]
for some $N\in\mathbb{N}$ and any $n\geq1$, where $N$ is called
the homogenous dimension of $G$.

We denote by $C(G)$ the space of functions on $G$. The support of
$u\in C(G)$ is defined as $\textrm{supp}(u)\coloneqq\{x\in G:u(x)\neq0\}$.
Let $C_{0}(G)$ be the set of all functions with finite support. For
any $u\in C(G)$, the $\ell^{p}$ norm of $u$ is defined as 
\[
\lVert u\rVert_{\ell^{p}(G)}\coloneqq\begin{cases}
\left(\underset{x\in G}{\sum}\lvert u(x)\rvert^{p}\right)^{1/p} & \text{\ensuremath{0<p<\infty,}}\\
\underset{x\in G}{\sup}\lvert u(x)\rvert & p=\infty,
\end{cases}
\]
and we shall write $\lVert u\rVert_{{\ell^{p}(G)}}$ as $\lVert u\rVert_{{p}}$
for convenience. The $\ell^{p}(G)$ space is defined as 
\[
\ell^{p}(G)\coloneqq\left\{ u\in C(G):\lVert u\rVert_{{\ell^{p}(G)}}<\infty\right\} .
\]
For any $u\in C(G)$, the difference operator is defined as for any
$x\sim y$

\[
\nabla_{xy}u=u(y)-u(x).
\]
Let 
\[
\lvert\nabla u(x)\rvert_{p}\coloneqq\left(\sum\limits _{y\sim x}\lvert\nabla_{xy}u\rvert^{p}\right)^{1/p}
\]
be the $p$-norm of the gradient of $u$ at $x$.

We define the Laplace operator as 
\[
\Delta u(x)\coloneqq\sum\limits _{y\sim x}(u(y)-u(x)).
\]
The $D^{k,p}$ ($k=1,2$) norms of $u$ are given by 
\[
\lVert u\rVert_{D^{1,p}(G)}\coloneqq\lVert\lvert\nabla u\rvert_{p}^{p}\rVert_{\ell^{1}(G)}^{1/p}=\left(\sum\limits _{x\in G}\sum\limits _{y\sim x}\lvert\nabla_{xy}u\rvert^{p}\right)^{1/p},
\]
\[
\lVert u\rVert_{D^{2,p}(G)}\coloneqq\lVert\Delta u\rVert_{\ell^{p}(G)}=\left(\sum\limits _{x\in G}\lvert\sum\limits _{y\sim x}\nabla_{xy}u\rvert^{p}\right)^{1/p}.
\]
We define $D_{0}^{k,p}(G)$ ($k=1,2$) as the completion of $C_{0}\left(G\right)$
in the $D^{k,p}$ norm and define 
\[
D^{k,p}(G)\coloneqq\left\{ u\in\ell^{\frac{Np}{N-kp}}(G):\lVert u\rVert_{D^{k,p}(G)}<\infty\right\} .
\]
In fact $D_{0}^{k,p}(G)=D^{k,p}(G)$, see \cite[Section 3]{HLM22}
for a proof.

Let $\Omega$ be a subset of $G$. We denoted by 
\[
\delta\Omega\coloneqq\left\{ x\in G\backslash\Omega:\exists y\in\Omega,s.t.\;x\sim y\right\} 
\]
the vertex boundary of $\Omega$, possibly an empty set. We set $\bar{\Omega}\coloneqq\Omega\cup\delta\Omega$.

Let $c_{0}(G)$ be the completion of $C_{0}\left(G\right)$ in $\ell^{\infty}$
norm. It is well-known that $\ell^{1}(G)=(c_{0}(G))^{\ast}$. We set
\[
\lVert\mu\rVert\coloneqq\sup\limits _{u\in c_{0}(G),\lVert u\rVert_{\infty}=1}\langle\mu,u\rangle,\qquad\forall\mu\in\ell^{1}(G).
\]
By definition, 
\[
\mu_{n}\xrightharpoonup{w^{\ast}}\mu\;\text{in}\;\ell^{1}(G)\;\text{if and only if}\;\langle\mu_{n},u\rangle\longrightarrow\langle\mu,u\rangle,\forall u\in c_{0}(G).
\]
In the proof, we will use the following facts (see \cite{C90}). 
\begin{fact}
(a) Every bounded sequence of $\ell^{1}(G)$ contains a $w^{\ast}$-convergent
subsequence. \\
 (b) If $\mu_{n}\xrightharpoonup{w^{\ast}}\mu$ in $\ell^{1}(G)$,
then ${\mu_{n}}$ is bounded and 
\[
\lVert\mu\rVert\leq\varliminf\limits _{n\to\infty}\lVert\mu_{n}\rVert.
\]
(c) If $\mu\in\ell^{1+}(G):=\left\{ \mu\in\ell^{1}(G):\mu\geq0\right\} $,
then 
\[
\lVert\mu\rVert=\langle\mu,1\rangle.
\]
\end{fact}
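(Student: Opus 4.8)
The final statement is the three-part Fact about $\ell^1(G)$ regarded as the dual of $c_0(G)$, together with the weak-$*$ topology on it. I would prove all three parts by unwinding the definitions and invoking the standard Banach-space duality for $\ell^1(G)=(c_0(G))^{\ast}$; since $G$ is a countable set, everything reduces to the classical separable-predual setting. The plan is to treat (a), (b), (c) in order, each as a short self-contained argument.

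\textbf{Part (a): weak-$*$ sequential compactness.} Since $G$ is countable, $C_0(G)$ is a countable-dimensional space whose $\ell^\infty$-completion $c_0(G)$ is a separable Banach space (the finitely supported rational-valued functions are dense). By the Banach--Alaoglu theorem, the closed unit ball of the dual $\ell^1(G)=(c_0(G))^{\ast}$ is weak-$*$ compact, and because the predual $c_0(G)$ is separable, this ball is weak-$*$ \emph{sequentially} compact (metrizability of the ball in the weak-$*$ topology). Concretely, given a bounded sequence $\{\mu_n\}\subset\ell^1(G)$ with $\sup_n\lVert\mu_n\rVert\le M$, I would extract a subsequence converging on each basis function $\delta_x$, $x\in G$, by a diagonal argument over the countable set $G$; the pointwise limit $\mu(x):=\lim_k\mu_{n_k}(x)$ satisfies $\sum_{x\in F}|\mu(x)|\le M$ for every finite $F\subset G$ by Fatou, hence $\mu\in\ell^1(G)$, and one checks $\langle\mu_{n_k},u\rangle\to\langle\mu,u\rangle$ first for $u\in C_0(G)$ and then for all $u\in c_0(G)$ by a standard $3\varepsilon$-estimate using uniform boundedness of the $\ell^1$ norms. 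This gives the $w^{\ast}$-convergent subsequence.

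\textbf{Part (b): boundedness and weak-$*$ lower semicontinuity of the norm.} Boundedness of a weak-$*$ convergent sequence is the uniform boundedness principle: for each fixed $u\in c_0(G)$ the scalars $\langle\mu_n,u\rangle$ converge, hence are bounded, so $\{\mu_n\}$ is pointwise bounded as a family of functionals on the Banach space $c_0(G)$, and Banach--Steinhaus yields $\sup_n\lVert\mu_n\rVert<\infty$. For the lower-semicontinuity inequality, fix any $u\in c_0(G)$ with $\lVert u\rVert_\infty=1$; then
\[
\langle\mu,u\rangle=\lim_{n\to\infty}\langle\mu_n,u\rangle\le\varliminf_{n\to\infty}\lVert\mu_n\rVert.
\]
Taking the supremum over all such $u$ gives $\lVert\mu\rVert\le\varliminf_{n\to\infty}\lVert\mu_n\rVert$, which is the claim.

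\textbf{Part (c): norm equals total mass for nonnegative measures.} For $\mu\in\ell^{1+}(G)$, the inequality $\langle\mu,u\rangle\le\lVert\mu\rVert\lVert u\rVert_\infty\le\lVert\mu\rVert$ for $\lVert u\rVert_\infty=1$ gives $\lVert\mu\rVert\le\langle\mu,1\rangle$ once one notes $\langle\mu,1\rangle=\sum_x\mu(x)=\lVert\mu\rVert_{\ell^1}$, and conversely I would test against an increasing sequence of cutoffs: let $F_k\subset G$ be finite sets exhausting $G$ and set $u_k:=\mathbf{1}_{F_k}\in C_0(G)\subset c_0(G)$, so $\lVert u_k\rVert_\infty=1$ and $\langle\mu,u_k\rangle=\sum_{x\in F_k}\mu(x)\to\sum_{x\in G}\mu(x)=\langle\mu,1\rangle$ by monotone convergence (using $\mu\ge0$). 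Hence $\lVert\mu\rVert\ge\langle\mu,1\rangle$, giving equality; note that $1\notin c_0(G)$ in general, so the pairing $\langle\mu,1\rangle$ must be read as $\sum_x\mu(x)$, which is exactly what the exhaustion computes. The only genuine subtlety—and thus the main point to be careful about—is the sequential form of weak-$*$ compactness in part (a): it relies essentially on separability of the predual $c_0(G)$, which holds precisely because $G$ is countable; the diagonal extraction makes this transparent without invoking metrizability abstractly.
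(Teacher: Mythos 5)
Your proof is correct. There is, however, no in-paper argument to compare it against: the paper states this Fact as known, deferring entirely to the citation \cite{C90}, and never proves any of (a), (b), (c). What you have written out is precisely the standard duality material that citation points to — for (a) the diagonal extraction over the countable set $G$, Fatou on finite subsets to see the limit lies in $\ell^{1}(G)$, and the $3\varepsilon$-approximation to upgrade convergence from $C_{0}(G)$ to $c_{0}(G)$ (equivalently, Banach--Alaoglu plus weak-$*$ metrizability of the ball over the separable predual); for (b) Banach--Steinhaus and the elementary lower-semicontinuity estimate; for (c) the exhaustion by indicators of finite sets, with the correct observation that $1\notin c_{0}(G)$ so $\langle\mu,1\rangle$ must be read as $\sum_{x}\mu(x)$. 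This is also exactly what the paper's later use of the Fact (in the Concentration-Compactness lemma) requires, so your proof fills the gap the authors left to the literature. One cosmetic point: in part (c) the chain $\langle\mu,u\rangle\le\lVert\mu\rVert\lVert u\rVert_{\infty}\le\lVert\mu\rVert$ momentarily writes $\lVert\mu\rVert$ where the $\ell^{1}$-norm is meant, before the identification of the dual norm with the $\ell^{1}$-norm has been justified; since your two displayed inequalities ($\lVert\mu\rVert\le\langle\mu,1\rangle$ via H\"older with $\mu\ge0$, and $\lVert\mu\rVert\ge\langle\mu,1\rangle$ via the exhaustion) together prove the statement without ever needing that identification, this is harmless, but writing $\lVert\mu\rVert_{\ell^{1}}$ there would remove the apparent circularity.
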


Next, we prove the discrete HLS inequalities on $G$.
\begin{thm}
\label{thm: hls on G}For $r,s>1,$ $0<\alpha<N$, $\frac{1}{r}+\frac{1}{s}+\frac{N-\alpha}{N}=2$,
we get the discrete HLS inequality

\begin{equation}
\sum_{\substack{x,y\in G\\
x\neq y
}
}R_{\alpha}(x,y)f(x)g(y)\leq C_{r,s,\alpha}\Vert f\Vert_{\ell^{r}}\Vert g\Vert_{\ell^{s}},\:\forall f\in\ell^{r}(G),g\in\ell^{s}(G),\label{eq:discrere hls-1}
\end{equation}
and an equivalent form is

\begin{equation}
\lVert R_{\alpha}\ast f\rVert_{\ell^{t}}\leq C_{r,\alpha}\lVert f\rVert_{\ell^{r}},\:\forall f\in\ell^{r}(G),\label{eq:hls2-1}
\end{equation}
where $1<r<\frac{N}{\alpha},0<\alpha<N,t=\frac{Nr}{N-\alpha r}$.
\end{thm}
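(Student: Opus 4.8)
The plan is to realize the left-hand side of (\ref{eq:discrere hls-1}) as a convolution against a weak-type kernel and then invoke the weak-type Young inequality, exactly as the classical Hardy--Littlewood--Sobolev inequality on $\mathbb{R}^{N}$ is recovered from the fact that $\mid x\mid^{\alpha-N}$ lies in weak $L^{N/(N-\alpha)}$. The first step is to exploit the group structure. Since left multiplication by any $g\in G$ is a graph automorphism of the Cayley graph (because $x\sim y$ means $y^{-1}x\in S$, which is preserved by $x\mapsto gx$), the Laplacian commutes with left translations, and hence so do the semigroup $e^{t\Delta}$ and the heat kernel: $k_{t}(gx,gy)=k_{t}(x,y)$. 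Integrating in $t$ shows the same for $R_{\alpha}$, so that $R_{\alpha}(x,y)=\rho(x^{-1}y)$ for the one-variable kernel $\rho(g):=R_{\alpha}(e,g)$. By the asymptotic relation recalled in the introduction we then have the pointwise upper bound $\rho(g)\lesssim\mid g\mid^{\alpha-N}$, where $\mid g\mid:=d^{S}(e,g)$; only this upper bound is needed for the inequality, the matching lower bound being relevant only to optimality.

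The second step is the weak-type membership $\rho\in\ell^{N/(N-\alpha),\infty}(G)$. I would estimate the distribution function directly: for $\lambda>0$, the bound $\rho(g)\lesssim\mid g\mid^{\alpha-N}$ gives the inclusion $\{g:\rho(g)>\lambda\}\subset\{g:\mid g\mid<(C/\lambda)^{1/(N-\alpha)}\}$, and the upper polynomial volume bound $\beta_{S}(n)\lesssim n^{N}$ then yields $\mid\{g:\rho(g)>\lambda\}\mid\lesssim\lambda^{-N/(N-\alpha)}$. Equivalently $\sup_{\lambda>0}\lambda\,\mid\{\rho>\lambda\}\mid^{(N-\alpha)/N}<\infty$, which is precisely the weak $\ell^{N/(N-\alpha)}$ quasinorm bound, with a constant depending only on $C_{2}(S)$ and the implied constant in the asymptotics of $R_{\alpha}$.

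The third step is to apply Young's inequality for weak-type spaces \cite[Theorem 1.4.25]{G14}. Writing $(\rho\star f)(y)=\sum_{x}\rho(x^{-1}y)f(x)$, the bilinear form in (\ref{eq:discrere hls-1}) equals $\langle\rho\star f,g\rangle$; to respect the constraint $x\neq y$ I would redefine $\rho(e):=0$, which affects neither the weak-type membership nor the convolution identity off the diagonal. The crucial structural point is that the counting measure is a bi-invariant Haar measure on any discrete group, so $G$ is unimodular and the proof of the weak Young inequality, which rests only on real interpolation and translation invariance of convolution, transfers verbatim from the Euclidean setting to $G$ equipped with the counting measure. With the strong norm of one factor replaced by the weak $\ell^{N/(N-\alpha),\infty}$ quasinorm of $\rho$, this gives $\Vert\rho\star f\Vert_{\ell^{t}}\lesssim\Vert\rho\Vert_{\ell^{N/(N-\alpha),\infty}}\Vert f\Vert_{\ell^{r}}$ for $1<r<N/\alpha$ and $\frac{1}{t}=\frac{1}{r}-\frac{\alpha}{N}$, i.e. $t=\frac{Nr}{N-\alpha r}$, which is (\ref{eq:hls2-1}).

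Finally, (\ref{eq:discrere hls-1}) follows from (\ref{eq:hls2-1}) by H\"older duality: $\langle\rho\star f,g\rangle\leq\Vert\rho\star f\Vert_{\ell^{t}}\Vert g\Vert_{\ell^{t'}}$, and setting $s=t'$ turns $\frac{1}{t}=\frac{1}{r}-\frac{\alpha}{N}$ into the scaling relation $\frac{1}{r}+\frac{1}{s}+\frac{N-\alpha}{N}=2$; conversely, taking the supremum over $\Vert g\Vert_{\ell^{t'}}=1$ recovers (\ref{eq:hls2-1}) from (\ref{eq:discrere hls-1}). I expect the main difficulty to be expository rather than deep, namely verifying that the weak Young inequality is legitimately available on a possibly non-abelian group of polynomial growth (it is, by unimodularity of the counting measure together with amenability of such groups) and tracking the diagonal so that excluding $x=y$ costs nothing. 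The latter works because $\frac{1}{r}+\frac{1}{s}=1+\frac{\alpha}{N}>1$ forces $r<s'$, hence $\ell^{r}\hookrightarrow\ell^{s'}$ and $\sum_{x}\mid f(x)g(x)\mid\leq\Vert f\Vert_{\ell^{s'}}\Vert g\Vert_{\ell^{s}}\lesssim\Vert f\Vert_{\ell^{r}}\Vert g\Vert_{\ell^{s}}$.
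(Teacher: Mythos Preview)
Your proposal is correct and follows essentially the same route as the paper: the asymptotic $R_{\alpha}(x,y)\simeq d^{S}(x,y)^{\alpha-N}$ places the kernel in weak $\ell^{N/(N-\alpha)}$, weak-type Young's inequality \cite[Theorem 1.4.25]{G14} gives (\ref{eq:hls2-1}), and H\"older yields (\ref{eq:discrere hls-1}). You supply several details the paper leaves implicit---the translation invariance that makes $R_{\alpha}$ a genuine convolution kernel, the explicit distribution-function computation for weak-type membership, the applicability of weak Young on a non-abelian unimodular group, and the harmlessness of excluding the diagonal---but the underlying argument is the same.
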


\begin{proof}
By the method of subordination and Bochner\textquoteright s functional
calculus \cite{SSV12,BBKRSV80}, the fractional Laplace operator on
$G$ is defined as
\[
(-\Delta)^{\frac{\alpha}{2}}u\coloneqq\sideset{\frac{1}{\mid\varGamma(-\frac{\alpha}{2})\mid}}{_{0}^{\infty}}\int\left(e^{t\Delta}u-u\right)t^{-1-\frac{\alpha}{2}}\text{d}t,
\]
where $e^{t\Delta}$ is the semigroup of $\Delta$, see \cite{KL12}.
Since $G$ with the homogenous dimension $N$ satisfies the weak Poincaré
inequality, the heat kernel $k_{t}(x,y)$ of the Laplace operator
on $G$ has the Gaussian heat kernel bounds \cite[Theorem 6.19]{B17}.
Hence the Green's function $R_{\alpha}$ of the fractional Laplace
is
\[
R_{\alpha}(x,y)=\frac{1}{\Gamma(\frac{\alpha}{2})}\int_{0}^{\infty}k_{t}(x,y)t^{-1+\frac{\alpha}{2}}\text{d\ensuremath{t}},\;x,y\in G,
\]
which has the asymptotic relation $R_{\alpha}(x,y)\simeq\left(d^{S}(x,y)\right)^{\alpha-N}$.
Then by the Young\textquoteright s inequality for weak type spaces
\cite[Theorem 1.4.25.]{G14}, 
\[
\lVert R_{\alpha}\ast f\rVert_{\ell^{t}}\lesssim\lVert R_{\alpha}\rVert_{\ell^{\frac{N}{N-\alpha},\infty}}\lVert f\rVert_{\ell^{r}}\simeq\lVert f\rVert_{\ell^{r}},
\]
where $\lVert f\rVert_{\ell^{p,\infty}}\coloneqq\underset{t>0}{\text{sup }}t\lambda_{f}^{1/p}(t),\,\lambda_{f}(t)\coloneqq\sharp\left\{ x:\mid f(x)\mid>t\right\} $.
And (\ref{eq:discrere hls-1}) follows from the H$\ddot{o}$lder inequality.
\end{proof}
Then we introduce the classical Brézis-Lieb lemma \cite[Theorem 1]{BL}.
Consider a measure space $(\Omega,\Sigma,\mu)$, which consists of
a set $\Omega$ equipped with a $\sigma$-algebra $\Sigma$ and a
Borel measure $\mu:\Sigma\to\left[0,\infty\right]$. 
\begin{lem}
\label{lem:(Brezis-Lieb-lemma)}(Brézis-Lieb lemma) Let $(\Omega,\Sigma,\mu)$
be a measure space, $\{u_{n}\}\subset L^{p}(\Omega,\Sigma,\mu)$,
and $0<p<\infty.$ If \\
 (a) $\;\{u_{n}\}$ is uniformly bounded in $L^{p}$, \\
 (b) $\;u_{n}\to u,n\to\infty$ $\mu$-almost everywhere in $\Omega$,
then 
\begin{equation}
\lim\limits _{n\to\infty}(\lVert u_{n}\rVert_{L^{p}}^{p}-\lVert u_{n}-u\rVert_{L^{p}}^{p})=\lVert u\rVert_{L^{p}}^{p}.\label{eq:weak}
\end{equation}
\end{lem}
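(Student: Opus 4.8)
The plan is to reduce the claim to a single pointwise estimate combined with the dominated convergence theorem, following the classical ``$\varepsilon$ of room'' device. The starting point is the elementary inequality asserting that for every $\varepsilon>0$ there is a constant $C_{\varepsilon}>0$ with
\[
\left||a+b|^{p}-|a|^{p}\right|\le\varepsilon|a|^{p}+C_{\varepsilon}|b|^{p},\qquad\forall a,b\in\mathbb{R}.
\]
I would establish this by homogeneity: dividing by $|a|^{p}$ and writing $t=b/a$, the statement becomes $\bigl||1+t|^{p}-1\bigr|\le\varepsilon+C_{\varepsilon}|t|^{p}$, which holds because the left side is $o(1)$ as $t\to0$ (hence $\le\varepsilon$ for small $|t|$) and is dominated by a large multiple of $|t|^{p}$ for $|t|$ bounded away from zero. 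Applying the inequality with $a=u_{n}-u$ and $b=u$, so that $a+b=u_{n}$, and then inserting the term $|u|^{p}$, I obtain the $\mu$-a.e. pointwise bound
\[
\left||u_{n}|^{p}-|u_{n}-u|^{p}-|u|^{p}\right|\le\varepsilon|u_{n}-u|^{p}+(C_{\varepsilon}+1)|u|^{p}.
\]

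The key step is to define the nonnegative functions
\[
W_{n,\varepsilon}:=\left(\left||u_{n}|^{p}-|u_{n}-u|^{p}-|u|^{p}\right|-\varepsilon|u_{n}-u|^{p}\right)^{+},
\]
so that the pointwise bound gives $0\le W_{n,\varepsilon}\le(C_{\varepsilon}+1)|u|^{p}$, a dominating function that is independent of $n$. Since $u_{n}\to u$ $\mu$-a.e. by hypothesis (b), we have $|u_{n}-u|^{p}\to0$ and $|u_{n}|^{p}\to|u|^{p}$ a.e., whence $W_{n,\varepsilon}\to0$ a.e. as $n\to\infty$. Before invoking dominated convergence I would first record that $u\in L^{p}$: by Fatou's lemma and hypothesis (a), $\lVert u\rVert_{L^{p}}^{p}\le\varliminf_{n}\lVert u_{n}\rVert_{L^{p}}^{p}\le\sup_{n}\lVert u_{n}\rVert_{L^{p}}^{p}<\infty$. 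Thus $(C_{\varepsilon}+1)|u|^{p}$ is integrable, and for each fixed $\varepsilon$ the dominated convergence theorem yields $\int_{\Omega}W_{n,\varepsilon}\,d\mu\to0$ as $n\to\infty$.

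Finally, directly from the definition of $W_{n,\varepsilon}$ we have the pointwise inequality
\[
\left||u_{n}|^{p}-|u_{n}-u|^{p}-|u|^{p}\right|\le W_{n,\varepsilon}+\varepsilon|u_{n}-u|^{p},
\]
which upon integration gives
\[
\left|\,\lVert u_{n}\rVert_{L^{p}}^{p}-\lVert u_{n}-u\rVert_{L^{p}}^{p}-\lVert u\rVert_{L^{p}}^{p}\right|\le\int_{\Omega}W_{n,\varepsilon}\,d\mu+\varepsilon\lVert u_{n}-u\rVert_{L^{p}}^{p}.
\]
Here $\lVert u_{n}-u\rVert_{L^{p}}^{p}$ is bounded by a constant $M$ independent of $n$, using $u\in L^{p}$ together with hypothesis (a) and the inequality $|a-b|^{p}\le 2^{p}(|a|^{p}+|b|^{p})$. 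Letting $n\to\infty$ and using $\int_{\Omega}W_{n,\varepsilon}\,d\mu\to0$ gives $\varlimsup_{n}|\cdots|\le\varepsilon M$; since $\varepsilon>0$ is arbitrary the $\varlimsup$ vanishes, which is exactly \eqref{eq:weak}.

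The main obstacle is not any single calculation but the structural insight in the second step: truncating the defect by $\varepsilon|u_{n}-u|^{p}$ and taking the positive part manufactures a sequence that \emph{simultaneously} converges to zero a.e. and is dominated, uniformly in $n$, by a fixed integrable function. Once this is in place, everything reduces to Fatou, dominated convergence, and the harmless bound on $\lVert u_{n}-u\rVert_{L^{p}}^{p}$. One subtlety worth flagging is the range $0<p<1$, where convexity-based inequalities fail and one must instead rely on the subadditive bound $|a+b|^{p}\le|a|^{p}+|b|^{p}$; the argument above is arranged so that only such elementary bounds (valid in both regimes) are used.
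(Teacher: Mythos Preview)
Your argument is correct and is essentially the original Br\'ezis--Lieb proof. Note, however, that the paper does not supply its own proof of this lemma: it is stated as the classical result and attributed to \cite{BL}, so there is no in-paper argument to compare against. Your write-up reproduces the standard route (the $\varepsilon$-of-room inequality, the truncated defect $W_{n,\varepsilon}$, Fatou for $u\in L^{p}$, dominated convergence, then $\varepsilon\to0$), and the care you take with the range $0<p<1$ is appropriate.
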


\begin{rem}
\label{rem:Br=0000E9zis-Lieb remark}(1) The preceding lemma is a
refinement of the Fatou's Lemma.

(2) Since $\;\{u_{n}\}$ is uniformly bounded in $L^{p}$, passing
to a subsequence if necessary, we have 
\[
\lim\limits _{n\to\infty}\lVert u_{n}\rVert_{p}^{p}=\lim\limits _{n\to\infty}\lVert u_{n}-u\rVert_{p}^{p}+\lVert u\rVert_{p}^{p}.
\]

(3) If $\Omega$ is countable and $\mu$ is a positive measure defined
on $\Omega$, then we get a discrete version of the classical Brézis-Lieb
lemma.

(4) If $p\geq1$, for every $q\in[1,p]$, $\{u_{n}\}\subset L^{p}$
still satisfies conditions (a) and (b), then there exists an easy
variant of the classical Brézis-Lieb lemma
\[
\lim\limits _{n\to\infty}\intop\mid\mid u_{n}\mid^{q}-\mid u_{n}-u\mid^{q}-\mid u\mid^{q}\mid^{\frac{p}{q}}\text{d\ensuremath{\mu}}=0.
\]

(5) For $p>1$, (a) and (b) yield $u_{n}\xrightharpoonup{w}u$ weakly
in $L^{p}(\Omega)$, that is, the pointwise convergence of a bounded
sequence yields the weak convergence, see \cite[Proposition 4.7.12]{B07}.
\end{rem}

Now we are ready to prove three discrete versions of the Brézis-Lieb
lemma.
\begin{lem}
\label{lem: Brezis=002013Lieb lemma-1}Let $\Omega\subset G$, ${\left\{ u_{n}\right\} }\subset D^{1,p}(G)$,
and $1\leq p<\infty$. If \\
 (a)$'$ $\left\{ {u_{n}}\right\} $ is uniformly bounded in $D^{1,p}(G)$,
\\
 (b)$'$ $u_{n}\to u,n\to\infty$ pointwise on $G$, then 
\begin{equation}
\lim\limits _{n\to\infty}\left(\sum\limits _{x\in\Omega}\lvert\nabla u_{n}(x)\rvert_{p}^{p}-\sum\limits _{x\in\Omega}\lvert\nabla(u_{n}-u)(x)\rvert_{p}^{p}\right)=\sum\limits _{x\in\Omega}\lvert\nabla u(x)\rvert_{p}^{p}.\label{eq:gradient}
\end{equation}
\end{lem}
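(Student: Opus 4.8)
The plan is to recognize both sums in (\ref{eq:gradient}) as honest $\ell^{p}$-norms of functions defined on the set of directed edges with tail in $\Omega$, and then to invoke the discrete Br\'ezis--Lieb lemma (Lemma \ref{lem:(Brezis-Lieb-lemma)} together with part (3) of Remark \ref{rem:Br=0000E9zis-Lieb remark}) on that edge set. Concretely, I would set
\[
\mathcal{E}_{\Omega}\coloneqq\{(x,y)\in G\times G:x\in\Omega,\;y\sim x\},
\]
a countable set, and equip it with the counting measure $\mu$. For each $n$ define $F_{n}\colon\mathcal{E}_{\Omega}\to\mathbb{R}$ by $F_{n}(x,y)\coloneqq\nabla_{xy}u_{n}=u_{n}(y)-u_{n}(x)$, and set $F(x,y)\coloneqq\nabla_{xy}u$. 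With this identification one has, since $\lvert\nabla v(x)\rvert_{p}^{p}=\sum_{y\sim x}\lvert\nabla_{xy}v\rvert^{p}$,
\[
\sum_{x\in\Omega}\lvert\nabla u_{n}(x)\rvert_{p}^{p}=\lVert F_{n}\rVert_{\ell^{p}(\mathcal{E}_{\Omega},\mu)}^{p},\qquad\sum_{x\in\Omega}\lvert\nabla(u_{n}-u)(x)\rvert_{p}^{p}=\lVert F_{n}-F\rVert_{\ell^{p}(\mathcal{E}_{\Omega},\mu)}^{p},
\]
while the right-hand side of (\ref{eq:gradient}) is exactly $\lVert F\rVert_{\ell^{p}(\mathcal{E}_{\Omega},\mu)}^{p}$.

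Next I would verify the two hypotheses of the Br\'ezis--Lieb lemma for the sequence $\{F_{n}\}$ on $(\mathcal{E}_{\Omega},\mu)$. Uniform $\ell^{p}$-boundedness is immediate from (a)$'$, because
\[
\lVert F_{n}\rVert_{\ell^{p}(\mathcal{E}_{\Omega},\mu)}^{p}=\sum_{x\in\Omega}\sum_{y\sim x}\lvert\nabla_{xy}u_{n}\rvert^{p}\leq\sum_{x\in G}\sum_{y\sim x}\lvert\nabla_{xy}u_{n}\rvert^{p}=\lVert u_{n}\rVert_{D^{1,p}(G)}^{p},
\]
and the latter is bounded uniformly in $n$ by assumption. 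Pointwise $\mu$-almost-everywhere convergence $F_{n}\to F$ (here ``$\mu$-a.e.'' means everywhere, as $\mu$ is the counting measure) follows from (b)$'$: for each fixed edge $(x,y)$ one has $F_{n}(x,y)=u_{n}(y)-u_{n}(x)\to u(y)-u(x)=F(x,y)$. In particular $F\in\ell^{p}(\mathcal{E}_{\Omega},\mu)$ by Fatou's lemma, so the right-hand side of (\ref{eq:gradient}) is finite; note that I do not need global membership $u\in D^{1,p}(G)$, only finiteness of the edge sum over $\Omega$.

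Finally, applying the discrete Br\'ezis--Lieb lemma to $\{F_{n}\}$ on the countable measure space $(\mathcal{E}_{\Omega},\mu)$ yields
\[
\lim_{n\to\infty}\left(\lVert F_{n}\rVert_{\ell^{p}(\mathcal{E}_{\Omega},\mu)}^{p}-\lVert F_{n}-F\rVert_{\ell^{p}(\mathcal{E}_{\Omega},\mu)}^{p}\right)=\lVert F\rVert_{\ell^{p}(\mathcal{E}_{\Omega},\mu)}^{p},
\]
and rewriting each term through the identities of the first paragraph gives precisely (\ref{eq:gradient}). I do not expect a genuine obstacle: the analytic content is entirely supplied by the classical Br\'ezis--Lieb lemma, and the only real work is the bookkeeping that re-expresses the gradient functionals as $\ell^{p}$-norms on the directed edge set and checks that the uniform $D^{1,p}(G)$ bound dominates the smaller edge sum restricted to $\Omega$. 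The mild subtlety worth flagging is purely notational, namely that the inner sum runs over all neighbors $y\sim x$ (possibly $y\notin\Omega$), so the natural index set is $\mathcal{E}_{\Omega}$ rather than $\Omega$ itself.
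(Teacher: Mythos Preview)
Your proposal is correct and is essentially the same argument as the paper's: both re-express $\sum_{x\in\Omega}\lvert\nabla u_{n}(x)\rvert_{p}^{p}$ as an $\ell^{p}$-norm on the countable set of directed edges with tail in $\Omega$ (the paper writes this set as $\widetilde{E}=E_{1}\cup E_{2}$, splitting into internal and boundary-crossing edges, which coincides with your $\mathcal{E}_{\Omega}$), verify the hypotheses of the classical Br\'ezis--Lieb lemma there, and conclude. The only difference is notational.
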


\begin{proof}
We define two directed edge sets as follows 
\[
E_{1}\coloneqq\left\{ e=\left(e_{-},e_{+}\right):e_{\pm}\in\Omega,e_{-}\sim e_{+}\right\} ,
\]
\[
E_{2}\coloneqq\left\{ e=\left(e_{-},e_{+}\right):\ensuremath{(e_{-},e_{+})\in}\Omega\times\ensuremath{\delta\Omega},e_{-}\sim e_{+}\right\} ,
\]
where $E_{1}$ is the set of internal edges of $\Omega$, $E_{2}$
is the set of edges that cross the boundary of $\Omega$, and $e_{-}$
and $e_{+}$ are the initial and terminal endpoints of $e$.

Set $\widetilde{E}\coloneqq E_{1}\cup E_{2}$. We define $\phantom{}\overline{u},\mu:\widetilde{E}\to\mathbb{R}$,
$\phantom{}\overline{u}(e)=u(e_{+})-u(e_{-})$, $\mu(e)=1$.

Then we get 
\[
\sum\limits _{\Omega}\lvert\nabla u_{n}(x)\rvert_{p}^{p}=\sum\limits _{\,\,E_{1}}\lvert\overline{u}_{n}(e)\rvert^{p}+\sum\limits _{\,\,E_{2}}\lvert\overline{u}_{n}(e)\rvert^{p}=\lVert\overline{u}_{n}\rVert_{\ell^{p}(\widetilde{E},\Sigma,\mu)}^{p}<\infty,
\]
\[
\overline{u}_{n}\to\overline{u}\;\textrm{pointwise\;in}\;\widetilde{E}.
\]
For the measure space $(\widetilde{E},\Sigma,\mu)$, by the Brézis-Lieb
lemma we have 
\[
\lim\limits _{n\to\infty}(\lVert\overline{u}_{n}\rVert_{\ell^{p}(\widetilde{E},\Sigma,\mu)}^{p}-\lVert\overline{u_{n}-u}\rVert_{\ell^{p}(\widetilde{E},\Sigma,\mu)}^{p})=\lVert\overline{u}\rVert_{\ell^{p}(\widetilde{E},\Sigma,\mu)}^{p},
\]
which is equivalent to the equation (\ref{eq:gradient}). 
\end{proof}
\begin{lem}
\label{lem: Br=0000E9zis-Lieb lemma-3}Let $\Omega\subset G$, ${\left\{ u_{n}\right\} }\subset D^{2,p}(G)$,
and $1<p<\infty$. If \\
 (a)$''$ $\left\{ {u_{n}}\right\} $ is uniformly bounded in $D^{2,p}(G)$,\\
 (b)$''$ $u_{n}\to u,n\to\infty$ pointwise on $G$, then 
\begin{equation}
\lim\limits _{n\to\infty}\left(\sum\limits _{x\in\Omega}\lvert\Delta u_{n}(x)\rvert^{p}-\sum\limits _{x\in\Omega}\lvert\Delta(u_{n}-u)(x)\rvert^{p}\right)=\sum\limits _{x\in\Omega}\lvert\Delta u(x)\rvert^{p}.\label{eq:gradient-2}
\end{equation}
\end{lem}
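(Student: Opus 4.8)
The plan is to mirror the proof of Lemma \ref{lem: Brezis=002013Lieb lemma-1}, reducing the statement to the classical Br\'ezis--Lieb lemma (Lemma \ref{lem:(Brezis-Lieb-lemma)}) applied on a suitable measure space. The key difference from the first-order case is that the natural object here is the Laplacian $\Delta u$ viewed as a function on the \emph{vertex} set $G$, rather than the gradient viewed as a function on an edge set. Because $\Delta$ is linear, this reduction is in fact more direct than the first-order one: there is no need to introduce the boundary edge set $E_2$, and the identity $\Delta(u_n-u)=\Delta u_n-\Delta u$ holds automatically on all of $G$.

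Concretely, I would work on the measure space $(\Omega,\Sigma,\mu)$, where $\mu$ is the counting measure, and set $v_n\coloneqq(\Delta u_n)\big|_{\Omega}$ and $v\coloneqq(\Delta u)\big|_{\Omega}$, so that
\[
\sum_{x\in\Omega}\lvert\Delta u_n(x)\rvert^p=\lVert v_n\rVert_{\ell^p(\Omega,\Sigma,\mu)}^p,\qquad\sum_{x\in\Omega}\lvert\Delta(u_n-u)(x)\rvert^p=\lVert v_n-v\rVert_{\ell^p(\Omega,\Sigma,\mu)}^p,
\]
the second identity using linearity of $\Delta$. It then remains to check the two hypotheses of Lemma \ref{lem:(Brezis-Lieb-lemma)} for $\{v_n\}$ with exponent $p\in(1,\infty)\subset(0,\infty)$. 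Hypothesis (a) is immediate: by (a)$''$ the quantity $\lVert\Delta u_n\rVert_{\ell^p(G)}$ is uniformly bounded, and restricting a nonnegative sum from $G$ to $\Omega\subset G$ only decreases it, so $\{v_n\}$ is uniformly bounded in $\ell^p(\Omega,\Sigma,\mu)$.

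The only point that genuinely uses the graph structure, and hence the main (though modest) obstacle, is verifying hypothesis (b), the pointwise convergence $v_n\to v$ on $\Omega$. This follows from (b)$''$ together with local finiteness of $G$: since $S$ is a finite generating set, every vertex $x$ has finitely many neighbours $y\sim x$, so for each fixed $x\in\Omega$,
\[
\Delta u_n(x)=\sum_{y\sim x}\bigl(u_n(y)-u_n(x)\bigr)\longrightarrow\sum_{y\sim x}\bigl(u(y)-u(x)\bigr)=\Delta u(x),\quad n\to\infty,
\]
because a finite sum of convergent sequences converges to the sum of the limits. With both hypotheses verified, Lemma \ref{lem:(Brezis-Lieb-lemma)} yields
\[
\lim_{n\to\infty}\bigl(\lVert v_n\rVert_{\ell^p(\Omega,\Sigma,\mu)}^p-\lVert v_n-v\rVert_{\ell^p(\Omega,\Sigma,\mu)}^p\bigr)=\lVert v\rVert_{\ell^p(\Omega,\Sigma,\mu)}^p,
\]
which is precisely the desired equation (\ref{eq:gradient-2}).
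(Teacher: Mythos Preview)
Your proof is correct and follows essentially the same approach as the paper: both reduce to the classical Br\'ezis--Lieb lemma applied to the sequence $\{\Delta u_n\}$ on $\Omega$ with counting measure, using that $\{\Delta u_n\}$ is uniformly bounded in $\ell^p(G)$ and that $\Delta u_n\to\Delta u$ pointwise. The paper's version is terser---it simply asserts these two hypotheses---whereas you have spelled out why pointwise convergence of $u_n$ yields pointwise convergence of $\Delta u_n$ via the local finiteness of $G$, which is a helpful clarification but not a different argument.
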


\begin{proof}
Since $\left\{ {\Delta u_{n}}\right\} $ is uniformly bounded in $\ell^{p}(G)$
and $\Delta u_{n}\rightarrow\Delta u,n\rightarrow\infty$ pointwise
on $G$, by the Brézis-Lieb lemma we get 
\[
\lim\limits _{n\to\infty}(\lVert\Delta u_{n}\rVert_{\ell^{p}(\Omega)}^{p}-\lVert\Delta\left(u_{n}-u\right)\rVert_{\ell^{p}(\Omega)}^{p})=\lVert\Delta u\rVert_{\ell^{p}(\Omega)}^{p},
\]
which is equivalent to the equation (\ref{eq:gradient-2}). 
\end{proof}
\begin{lem}
\label{lem: Brezis=002013Lieb lemma-2}Let $\Omega\subset G$, for
$N\geq3$, $\alpha\in(0,N)$, $p\geq1$, and ${\left\{ u_{n}\right\} }\subset\ell^{\frac{2Np}{N+\alpha}}(G)$.
If \\
(a)$'''$ $\left\{ {u_{n}}\right\} $ is uniformly bounded in $\ell^{\frac{2Np}{N+\alpha}}(G)$,
\\
(b)$'''$ $u_{n}\to u,n\to\infty$ pointwise on $G$, then 
\begin{equation}
\lim\limits _{n\to\infty}\left(\sum_{\substack{\Omega\\
\\
}
}\left(R_{\alpha}\ast\lvert u_{n}\rvert^{p}\right)\lvert u_{n}\rvert^{p}-\sum_{\substack{\Omega\\
\\
}
}\left(R_{\alpha}\ast\lvert u_{n}-u\rvert^{p}\right)\lvert u_{n}-u\rvert^{p}\right)=\sum_{\substack{\Omega\\
\\
}
}\left(R_{\alpha}\ast\lvert u\rvert^{p}\right)\lvert u\rvert^{p}.\label{eq:I_a trem-Brezis-Lieb}
\end{equation}
\end{lem}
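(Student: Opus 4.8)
The plan is to reduce the nonlocal identity to the bilinear structure of the HLS pairing, handling the powers by the scalar Br\'ezis--Lieb lemma and the cross terms by a weak-convergence argument. Write $w_{n}:=u_{n}-u$ and set $a_{n}:=\lvert u_{n}\rvert^{p}$, $c_{n}:=\lvert w_{n}\rvert^{p}$, $b:=\lvert u\rvert^{p}$, together with the exponent $m:=\frac{2N}{N+\alpha}$, whose H\"older conjugate is $m'=\frac{2N}{N-\alpha}$. Since $\alpha<N$ we have $m>1$, and by hypothesis $\{u_{n}\}$ is bounded in $\ell^{pm}=\ell^{\frac{2Np}{N+\alpha}}$, so $\{a_{n}\}$ and $\{c_{n}\}$ are bounded in $\ell^{m}(G)$ while $a_{n}\to b$ and $c_{n}\to 0$ pointwise on $G$. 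For $f,g\in\ell^{m}(G)$ I introduce the bilinear form
\[
T_{\Omega}(f,g):=\sum_{x\in\Omega}\left(R_{\alpha}\ast f\right)(x)\,g(x)=\sum_{x\in\Omega}\sum_{y\in G}R_{\alpha}(x,y)f(y)g(x),
\]
so that the three sums in (\ref{eq:I_a trem-Brezis-Lieb}) are exactly $T_{\Omega}(a_{n},a_{n})$, $T_{\Omega}(c_{n},c_{n})$ and $T_{\Omega}(b,b)$.

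First I would record the two analytic inputs. Because $\lvert(R_{\alpha}\ast f)(x)\rvert\le(R_{\alpha}\ast\lvert f\rvert)(x)$ and the equivalent HLS form (\ref{eq:hls2}) applied with $r=m$ gives $t=\frac{Nm}{N-\alpha m}=\frac{2N}{N-\alpha}=m'$, H\"older's inequality yields
\[
\lvert T_{\Omega}(f,g)\rvert\le\lVert R_{\alpha}\ast\lvert f\rvert\rVert_{\ell^{m'}}\,\lVert g\rVert_{\ell^{m}}\le C\,\lVert f\rVert_{\ell^{m}}\,\lVert g\rVert_{\ell^{m}},
\]
so $T_{\Omega}$ is a bounded bilinear form on $\ell^{m}(G)\times\ell^{m}(G)$ (this is precisely (\ref{eq: hls1}) with $r=s=m$, since $\frac{1}{m}+\frac{1}{m}+\frac{N-\alpha}{N}=2$). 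Next, applying the scalar Br\'ezis--Lieb variant of Remark \ref{rem:Br=0000E9zis-Lieb remark}(4) to $\{u_{n}\}$ in $\ell^{pm}(G)$ with power $p$ (admissible because $p\le pm$) gives $\lVert a_{n}-c_{n}-b\rVert_{\ell^{m}}\to 0$, i.e. $d_{n}:=a_{n}-c_{n}\to b$ strongly in $\ell^{m}(G)$. Finally, as $m>1$ and $\{c_{n}\}$ is bounded in the reflexive space $\ell^{m}(G)$ with $c_{n}\to 0$ pointwise, Remark \ref{rem:Br=0000E9zis-Lieb remark}(5) yields $c_{n}\rightharpoonup 0$ weakly in $\ell^{m}(G)$.

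With these in hand I would expand, using bilinearity and $a_{n}=c_{n}+d_{n}$,
\[
T_{\Omega}(a_{n},a_{n})-T_{\Omega}(c_{n},c_{n})=T_{\Omega}(d_{n},d_{n})+T_{\Omega}(c_{n},d_{n})+T_{\Omega}(d_{n},c_{n}).
\]
The diagonal term converges: writing $T_{\Omega}(d_{n},d_{n})-T_{\Omega}(b,b)=T_{\Omega}(d_{n}-b,d_{n})+T_{\Omega}(b,d_{n}-b)$ and invoking the boundedness of $T_{\Omega}$ with $\lVert d_{n}-b\rVert_{\ell^{m}}\to 0$ gives $T_{\Omega}(d_{n},d_{n})\to T_{\Omega}(b,b)$. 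For the cross term $T_{\Omega}(d_{n},c_{n})$ I would split off the strong limit as $T_{\Omega}(d_{n},c_{n})=T_{\Omega}(b,c_{n})+T_{\Omega}(d_{n}-b,c_{n})$; the second summand is bounded by $C\lVert d_{n}-b\rVert_{\ell^{m}}\lVert c_{n}\rVert_{\ell^{m}}\to 0$, while $T_{\Omega}(b,c_{n})=\sum_{y\in G}c_{n}(y)\psi(y)$ with the fixed function $\psi:=\mathbf 1_{\Omega}\cdot(R_{\alpha}\ast b)\in\ell^{m'}(G)$, which tends to $0$ by $c_{n}\rightharpoonup 0$. Symmetrically, using $R_{\alpha}(x,y)=R_{\alpha}(y,x)$ one writes $T_{\Omega}(c_{n},d_{n})=T_{\Omega}(c_{n},b)+T_{\Omega}(c_{n},d_{n}-b)$, where the last term vanishes by boundedness and $T_{\Omega}(c_{n},b)=\sum_{y\in G}c_{n}(y)\phi(y)$ with $\phi:=R_{\alpha}\ast(\mathbf 1_{\Omega}b)\in\ell^{m'}(G)$ tends to $0$ again by weak convergence. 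Summing the three limits gives (\ref{eq:I_a trem-Brezis-Lieb}).

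I expect the cross terms to be the only genuine obstacle: mere boundedness of $T_{\Omega}$ does not force $T_{\Omega}(c_{n},d_{n})\to 0$, since $\lVert c_{n}\rVert_{\ell^{m}}$ need not vanish. The decisive step is to isolate the strong limit $b$ of $d_{n}$, so that each residual cross term is tested against a single fixed $\ell^{m'}$ functional, at which point the weak (rather than strong) convergence $c_{n}\rightharpoonup 0$ is enough. This manoeuvre relies precisely on the self-duality $m'=\frac{2N}{N-\alpha}$ of the HLS exponent $m=\frac{2N}{N+\alpha}$ and on the symmetry of $R_{\alpha}$.
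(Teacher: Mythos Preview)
Your proof is correct and follows essentially the same route as the paper's. Both arguments use the bilinear HLS form, the Br\'ezis--Lieb variant (Remark~\ref{rem:Br=0000E9zis-Lieb remark}(4)) to get $d_{n}=\lvert u_{n}\rvert^{p}-\lvert u_{n}-u\rvert^{p}\to\lvert u\rvert^{p}$ strongly in $\ell^{2N/(N+\alpha)}$ for the diagonal term, and the weak convergence $\lvert u_{n}-u\rvert^{p}\rightharpoonup 0$ in $\ell^{2N/(N+\alpha)}$ (Remark~\ref{rem:Br=0000E9zis-Lieb remark}(5)) to kill the cross terms; the paper combines the two cross terms into a single bound $2\sum_{G}(R_{\alpha}\ast\lvert f_{n}\rvert)\lvert u_{n}-u\rvert^{p}$ before passing to the limit, whereas you treat them separately by isolating the strong limit $b$, but this is only a bookkeeping difference.
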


\begin{proof}
First we have 
\begin{align*}
 & \sum_{\substack{\Omega\\
\\
}
}\left(R_{\alpha}\ast\lvert u_{n}\rvert^{p}\right)\lvert u_{n}\rvert^{p}-\sum_{\substack{\Omega\\
\\
}
}\left(R_{\alpha}\ast\lvert u_{n}-u\rvert^{p}\right)\lvert u_{n}-u\rvert^{p}\\
= & \sum_{\substack{\Omega\\
\\
}
}\left(R_{\alpha}\ast\left(\lvert u_{n}\rvert^{p}-\lvert u_{n}-u\rvert^{p}\right)\right)\left(\lvert u_{n}\rvert^{p}-\lvert u_{n}-u\rvert^{p}\right)\\
 & +\sum_{\substack{\Omega\\
\\
}
}\left(R_{\alpha}\ast\lvert u_{n}\rvert^{p}\right)\lvert u_{n}-u\rvert^{p}+\sum_{\substack{\Omega\\
\\
}
}\left(R_{\alpha}\ast\lvert u_{n}-u\rvert^{p}\right)\lvert u_{n}\rvert^{p}\\
 & -2\sum_{\substack{\Omega\\
\\
}
}\left(R_{\alpha}\ast\lvert u_{n}-u\rvert^{p}\right)\lvert u_{n}-u\rvert^{p}\\
\eqqcolon & \text{\mbox{I}}+\text{\mbox{II}},
\end{align*}
where 
\[
\text{\mbox{I}}=\sum_{\substack{x\in\Omega\\
\\
}
}\left(R_{\alpha}\ast f_{n}(x)\right)f_{n}(x),\;f_{n}(x)\coloneqq\lvert u_{n}(x)\rvert^{p}-\lvert u_{n}(x)-u(x)\rvert^{p}
\]
and
\begin{align*}
\text{\mbox{II}} & =\sum_{\substack{\Omega\\
\\
}
}\left(R_{\alpha}\ast\lvert u_{n}\rvert^{p}\right)\lvert u_{n}-u\rvert^{p}+\sum_{\substack{\Omega\\
\\
}
}\left(R_{\alpha}\ast\lvert u_{n}-u\rvert^{p}\right)\lvert u_{n}\rvert^{p}\\
 & -2\sum_{\substack{\Omega\\
\\
}
}\left(R_{\alpha}\ast\lvert u_{n}-u\rvert^{p}\right)\lvert u_{n}-u\rvert^{p}.
\end{align*}
For \mbox{I}, by the HLS inequality (\ref{eq: hls1}) we get

\begin{align*}
 & \lvert\text{\mbox{I}}-\sum_{\substack{\Omega\\
\\
}
}\left(R_{\alpha}\ast\lvert u\rvert^{p}\right)\lvert u\rvert^{p}\lvert\\
 & =\lvert\sum_{\substack{\Omega\\
\\
}
}\left(R_{\alpha}\ast(f_{n}-\lvert u\rvert^{p})\right)f_{n}+\sum_{\substack{\Omega\\
\\
}
}\left(R_{\alpha}\ast\lvert u\rvert^{p}\right)(f_{n}-\lvert u\rvert^{p})\lvert\\
 & \leq\sum\limits _{\substack{G\\
\\
}
}\left(R_{\alpha}\ast\lvert f_{n}-\lvert u\rvert^{p}\lvert\right)\lvert f_{n}\lvert+\sum_{\substack{G\\
\\
}
}\left(R_{\alpha}\ast\lvert u\rvert^{p}\right)\lvert f_{n}-\lvert u\rvert^{p}\lvert\\
 & \lesssim\lVert f_{n}-\lvert u\rvert^{p}\rVert_{\frac{2N}{N+\alpha}}\lVert f_{n}\rVert_{\frac{2N}{N+\alpha}}+\lVert f_{n}-\lvert u\rvert^{p}\rVert_{\frac{2N}{N+\alpha}}\lVert u\rVert_{\frac{2Np}{N+\alpha}}^{p}.
\end{align*}
Since $\left\{ {u_{n}}\right\} $ is uniformly bounded in $\ell^{\frac{2Np}{N+\alpha}}$,
by Remark \ref{rem:Br=0000E9zis-Lieb remark} (4) we get $f_{n}\to\mid u\mid^{p}$
strongly in $\ell^{\frac{2N}{N+\alpha}}$ as $n\to\infty$. Hence
by letting $n\to\infty$,
\[
\text{\mbox{I}}\to\sum_{\substack{\Omega\\
\\
}
}\left(R_{\alpha}\ast\lvert u\rvert^{p}\right)\lvert u\rvert^{p}.
\]
For \mbox{II},
\begin{align*}
\lvert\text{\mbox{II}} & \lvert=\lvert\sum_{\substack{\Omega\\
\\
}
}\left(R_{\alpha}\ast f_{n}\right)\lvert u_{n}-u\rvert^{p}+\sum_{\substack{\Omega\\
\\
}
}\left(R_{\alpha}\ast\lvert u_{n}-u\rvert^{p}\right)f_{n}\lvert\\
 & \leq\sum\limits _{\substack{G\\
\\
}
}\left(R_{\alpha}\ast\lvert f_{n}\lvert\right)\lvert u_{n}-u\rvert^{p}+\sum\limits _{\substack{G\\
\\
}
}\left(R_{\alpha}\ast\lvert u_{n}-u\rvert^{p}\right)\lvert f_{n}\lvert\\
 & \leq2\sum\limits _{G}(R_{\alpha}\ast\mid f_{n}\mid)\lvert u_{n}-u\rvert^{p}.
\end{align*}
By the HLS inequality (\ref{eq:hls2}), Remark \ref{rem:Br=0000E9zis-Lieb remark}
(4) and (5),
\[
R_{\alpha}\ast\mid f_{n}\mid\to R_{\alpha}\ast\mid u\mid^{p}\:\text{in }\ensuremath{\ell^{\frac{2N}{N-\alpha}}},
\]
\[
\lvert u-u_{n}\rvert^{p}\xrightharpoonup{w}0\;\text{weakly in \ensuremath{\ell^{\frac{2N}{N+\alpha}}}},
\]
then we get $\text{\mbox{II}}\to0$ as $n\to\infty$. Hence we reach
the conclusion.
\end{proof}
\begin{rem}
Note that a counterpart of the Brézis-Lieb lemma holds for the nonlocal
term of the Riesz potential $I_{\alpha}$ on $\mathbb{R}^{N}$ \cite[Lemma 3.2]{YW13}\cite[§5.1]{A06}:
if the sequence $\left\{ u_{n}\right\} $ is uniformly bounded in
$L^{\frac{2Np}{N+\alpha}}(\mathbb{R}^{N})$ and $u_{n}\to u,n\to\infty$
almost everywhere on $\mathbb{R}^{N}$, then 
\[
\lim\limits _{n\to\infty}\left(\underset{\mathbb{R}^{N}}{\int}(I_{\alpha}\ast\mid u_{n}\mid^{p})\mid u_{n}\mid^{p}-\underset{\mathbb{R}^{N}}{\int}(I_{\alpha}\ast\mid u_{n}-u\mid^{p})\mid u_{n}-u\mid^{p}\right)=\underset{\mathbb{R}^{N}}{\int}(I_{\alpha}\ast\mid u\mid^{p})\mid u\mid^{p}.
\]
Here we prove a discrete version on $\Omega\subset G$, which is necessary
in the later proofs of the discrete Concentration-Compactness lemma.
\end{rem}

\section{Concentration-Compactness Principle}

In this section, we will establish the discrete Concentration-Compactness
principle and prove a key lemma to rule out the vanishing case of
the limit function.
\begin{lem}
\label{lem:Concentration-Compact}(Discrete Concentration-Compactness
lemma) For $N\geq3$, $\alpha\in(0,N)$, $p\geq\frac{N+\alpha}{N-2}$,
if $\{u_{n}\}$ is uniformly bounded in $D^{1,2}(G)$. Then passing
to a subsequence if necessary, still denoted as $\{u_{n}\}$, we have
\begin{equation}
u_{n}\to u\quad\textrm{pointwise\;on}\;G,\label{eq:pointwise}
\end{equation}

\begin{equation}
\lvert\nabla u_{n}\rvert_{2}^{2}\xrightharpoonup{w^{\ast}}\lvert\nabla u\rvert_{2}^{2}\quad\textrm{in}\;\ell^{1}(G).\label{eq:w-star}
\end{equation}
And the following limits 
\[
\lim\limits _{r\to\infty}\lim\limits _{n\to\infty}\sum\limits _{d^{S}(x,e)>r}\lvert\nabla u_{n}\left(x\right)\rvert_{2}^{2}\eqqcolon\mu_{\infty},\:\lim\limits _{r\to\infty}\lim\limits _{n\to\infty}\sum\limits _{\substack{d^{S}(x,e)>r\\
\\
}
}(R_{\alpha}\ast\mid u_{n}\mid^{p})\lvert u_{n}(x)\rvert^{p}\eqqcolon\nu_{\infty}
\]
exist. For the above $\left\{ u_{n}\right\} $, we have 
\begin{equation}
\lvert\nabla(u_{n}-u)\rvert_{2}^{2}\xrightharpoonup{w^{\ast}}0\quad\text{in}\;\ell^{1}(G),\label{eq:nu=00003D00003D00003D00003D00003D00003D0}
\end{equation}
\begin{equation}
(R_{\alpha}\ast\mid u_{n}-u\mid^{p})\mid u_{n}-u\mid^{p}\xrightharpoonup{w^{\ast}}0\quad\text{in}\;\ell^{1}(G),\label{eq:mu=00003D00003D00003D00003D00003D00003D0}
\end{equation}
\begin{equation}
\nu_{\infty}\leq K\mu_{\infty}^{p},\label{eq:infinite}
\end{equation}
\begin{equation}
\lim\limits _{n\to\infty}\lVert u_{n}\rVert_{D^{1,2}}^{2}=\lVert u\rVert_{D^{1,2}}^{2}+\mu_{\infty},\label{eq:composition1}
\end{equation}
\begin{equation}
\lim\limits _{n\to\infty}\sum(R_{\alpha}\ast\mid u_{n}\mid^{p})\mid u_{n}\mid^{p}=\sum(R_{\alpha}\ast\mid u\mid^{p})\mid u\mid^{p}+\nu_{\infty}.\label{eq:composition2}
\end{equation}
\end{lem}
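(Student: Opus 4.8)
The plan is to extract, through successive (diagonal) subsequences, all the limits appearing in the statement, and then to tie them together using the three Br\'ezis--Lieb lemmas and the optimal inequality \eqref{eq: sup}. First I would observe that the uniform $D^{1,2}$-bound combined with the Sobolev inequality \eqref{eq:discrete sobo} gives a uniform $\ell^{2^\ast}$-bound, hence the pointwise bound $|u_n(x)|\le C$ at every $x\in G$; since $G$ is countable, a diagonal extraction produces a subsequence with $u_n\to u$ pointwise, which is \eqref{eq:pointwise}. Applying Fatou's lemma to $|\nabla u_n(x)|_2^2\to|\nabla u(x)|_2^2$ (a finite sum at each vertex, as $S$ is finite) shows $u\in D^{1,2}(G)$, so $w_n:=u_n-u$ is again bounded in $D^{1,2}(G)$.

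The recurring mechanism I would isolate as an elementary observation is the following: if $\{\rho_n\}\subset\ell^{1+}(G)$ is bounded in $\ell^1$ and $\rho_n\to\rho$ pointwise on $G$, then $\rho_n\xrightharpoonup{w^\ast}\rho$ in $\ell^1(G)$ — one tests against $\varphi\in c_0(G)$, splits $G$ into a finite set where $|\varphi|$ is large and a tail where $|\varphi|<\varepsilon$, and controls the tail by the uniform $\ell^1$-bound. Applied to $\rho_n=|\nabla u_n|_2^2$ this gives \eqref{eq:w-star} (the pointwise limit is $|\nabla u|_2^2$ because $\nabla_{xy}u_n\to\nabla_{xy}u$ on each edge). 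Applied to $\rho_n=|\nabla w_n|_2^2$, which tends to $0$ pointwise, it gives the weak-$\ast$ vanishing \eqref{eq:nu=00003D00003D00003D00003D00003D00003D0}; and applied to $\rho_n=(R_\alpha\ast|w_n|^p)|w_n|^p$ it gives \eqref{eq:mu=00003D00003D00003D00003D00003D00003D0}, where the needed pointwise decay holds because $|w_n(x)|^p\to0$ while $(R_\alpha\ast|w_n|^p)(x)$ stays uniformly bounded, since by \eqref{eq:hls2} the sequence $R_\alpha\ast|w_n|^p$ is bounded in $\ell^{2N/(N-\alpha)}$ and hence pointwise bounded. The required $\ell^1$-boundedness of the nonlocal density is exactly \eqref{eq: main inequality} together with the embedding $\ell^{2^\ast}\hookrightarrow\ell^{2Np/(N+\alpha)}$, which is available precisely because $p\ge\frac{N+\alpha}{N-2}$.

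For the mass at infinity I would pass to a further subsequence so that $\sum_{d^S(x,e)>r}|\nabla u_n|_2^2$ and $\sum_{d^S(x,e)>r}(R_\alpha\ast|u_n|^p)|u_n|^p$ converge as $n\to\infty$ for every $r\in\mathbb N$ (diagonal over $r$); each of these $n$-limits is nonnegative and non-increasing in $r$, so the $r$-limits $\mu_\infty,\nu_\infty$ exist. To prove the composition identities \eqref{eq:composition1} and \eqref{eq:composition2} I would split the total sum into the ball $B_e(r)$ and its complement and let $n\to\infty$ first: on the finite ball $B_e(r)$ the local factors converge pointwise, and for the nonlocal factor I would use that $|u_n|^p\xrightharpoonup{w}|u|^p$ in $\ell^{2N/(N+\alpha)}$ (Remark \ref{rem:Br=0000E9zis-Lieb remark}(5)) paired against $R_\alpha(x,\cdot)\in\ell^{2N/(N-\alpha)}$ (the dual exponent, finite by the asymptotics $R_\alpha(x,y)\simeq d^S(x,y)^{\alpha-N}$) to conclude $(R_\alpha\ast|u_n|^p)(x)\to(R_\alpha\ast|u|^p)(x)$ at each fixed $x$; letting $r\to\infty$ and using that the left-hand sides are independent of $r$ then yields the two identities.

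Finally, \eqref{eq:infinite} is where the nonlocal structure bites, and the clean route I would take avoids cutoff functions entirely. Combining the gradient Br\'ezis--Lieb lemma (Lemma \ref{lem: Brezis=002013Lieb lemma-1} with $\Omega=G$, $p=2$) with \eqref{eq:composition1} gives $\lim_n\|w_n\|_{D^{1,2}}^2=\mu_\infty$, while combining the nonlocal Br\'ezis--Lieb lemma (Lemma \ref{lem: Brezis=002013Lieb lemma-2} with $\Omega=G$) with \eqref{eq:composition2} gives $\lim_n\sum_G(R_\alpha\ast|w_n|^p)|w_n|^p=\nu_\infty$. Applying the optimal inequality $\sum_G(R_\alpha\ast|w_n|^p)|w_n|^p\le K\|w_n\|_{D^{1,2}}^{2p}$ from \eqref{eq: sup} to $w_n$ and passing to the limit then gives $\nu_\infty\le K\mu_\infty^p$. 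The main obstacle throughout is the failure of $R_\alpha\ast$ to localize: every place where a naive truncation would suffice in the purely local setting must instead be handled through the HLS duality $R_\alpha(x,\cdot)\in\ell^{2N/(N-\alpha)}$ and the nonlocal Br\'ezis--Lieb lemma, and it is exactly the identification $\lim_n\sum_G(R_\alpha\ast|w_n|^p)|w_n|^p=\nu_\infty$ that makes the cutoff-free proof of \eqref{eq:infinite} go through.
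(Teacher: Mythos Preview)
Your proposal is correct, and for most of the items it follows the paper's argument closely: the diagonal extraction for pointwise convergence, the elementary mechanism ``pointwise convergence plus $\ell^1$-bounded implies weak-$\ast$ convergence'' for \eqref{eq:w-star}--\eqref{eq:mu=00003D00003D00003D00003D00003D00003D0}, and the splitting into ball and tail for \eqref{eq:composition1}--\eqref{eq:composition2}. You are in fact more careful than the paper in one place: to pass from $u_n\to u$ pointwise to $(R_\alpha\ast|u_n|^p)(x)\to(R_\alpha\ast|u|^p)(x)$, the paper simply invokes \eqref{eq:pointwise}, but this is a nonlocal quantity, and your justification via $|u_n|^p\rightharpoonup|u|^p$ in $\ell^{2N/(N+\alpha)}$ paired against $R_\alpha(x,\cdot)\in\ell^{2N/(N-\alpha)}$ is the right way to fill that in.

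The genuine divergence is in \eqref{eq:infinite}. The paper introduces a cutoff $\Psi_r$ vanishing on $B_e(r)$, applies the optimal Sobolev-type inequality to $v_n\Psi_r$, and then spends most of the effort controlling the commutator between $R_\alpha\ast$ and multiplication by $\Psi_r^p$ (the ``key observation'' \eqref{eq:claim of cutoff fun}); after that, Br\'ezis--Lieb on $\{d^S(x,e)>r\}$ identifies the truncated double limits with $\mu_\infty,\nu_\infty$. Your route is shorter: you first establish \eqref{eq:composition1}--\eqref{eq:composition2}, combine them with the \emph{global} Br\'ezis--Lieb lemmas (Lemmas \ref{lem: Brezis=002013Lieb lemma-1} and \ref{lem: Brezis=002013Lieb lemma-2} with $\Omega=G$) to get $\lim_n\|w_n\|_{D^{1,2}}^2=\mu_\infty$ and $\lim_n\sum_G(R_\alpha\ast|w_n|^p)|w_n|^p=\nu_\infty$ directly, and then simply apply the optimal inequality \eqref{eq: sup} to $w_n$ and pass to the limit. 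This sidesteps the cutoff and commutator computation entirely; what it costs is a reordering of the argument, since you now need \eqref{eq:composition1}--\eqref{eq:composition2} (and hence the pointwise convergence of the nonlocal density) before you can reach \eqref{eq:infinite}, whereas the paper's cutoff argument for \eqref{eq:infinite} is self-contained.
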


\begin{proof}
Since $\{u_{n}\}$ is uniformly bounded in $\ell^{\frac{2Np}{N+\alpha}}(G)$,
and hence in $\ell^{\infty}(G)$. By the diagonal principle, passing
to a subsequence we get (\ref{eq:pointwise}). Since $\left\{ \lvert\nabla u_{n}\rvert_{2}^{2}\right\} $
is uniformly bounded in $\ell^{1}(G)$, we get (\ref{eq:w-star})
by the Banach-Alaoglu theorem and (\ref{eq:pointwise}). For every
$r\geq1$, passing to a subsequence if necessary, 
\[
\lim\limits _{n\to\infty}\sum\limits _{d^{S}(x,e)>r}\lvert\nabla u_{n}\left(x\right)\rvert_{2}^{2},\qquad\lim\limits _{n\to\infty}\sum\limits _{\substack{d^{S}(x,e)>r\\
\\
}
}(R_{\alpha}\ast\mid u_{n}\mid^{p})\lvert u_{n}(x)\rvert^{p}
\]
exist, where $d^{S}$ is the word metric as defined in Section 2.
Then we can define $\mu_{\infty}$, $\nu_{\infty}$ by the monotonicity
in $R$.

Let $v_{n}:=u_{n}-u$, then $v_{n}\to0$ pointwise on $G$ and $\left\{ \lvert\nabla v_{n}\rvert_{2}^{2}\right\} $
is uniformly bounded in $\ell^{1}(G)$. Then any subsequence of $\left\{ \lvert\nabla v_{n}\rvert_{2}^{2}\right\} $
contains a subsequence (still denoted as $\left\{ \lvert\nabla v_{n}\rvert_{2}^{2}\right\} $)
that $w^{\ast}$-converges to 0 in $\ell^{1}(G)$, which follows from
\[
\sum h\lvert\nabla v_{n}\rvert_{2}^{2}\to0,\text{ }\forall h\in C_{0}(G).
\]
Hence we get (\ref{eq:nu=00003D00003D00003D00003D00003D00003D0}).
Similarly, we get (\ref{eq:mu=00003D00003D00003D00003D00003D00003D0}).

For $r\geq1$, let $\Psi_{r}\in C(G)$ such that $\Psi_{r}(x)=1$
for $d^{S}(x,e)\geq r+1$, $\Psi_{r}(x)=0$ for $d^{S}(x,e)\leq r$.
By the HLS inequality (\ref{eq: hls1}), we have 
\begin{align}
(\sum(R_{\alpha}\ast\mid v_{n}\Psi_{r}\mid^{p})\mid v_{n}\Psi_{r}\mid^{p}) & \leq K\left(\sum\lvert\nabla(v_{n}\Psi_{r})\rvert^{2}\right)^{p}\label{eq:ineq1}\\
 & =K\left(\underset{x}{\sum}\underset{y\sim x}{\sum}\lvert\nabla_{xy}\Psi_{r}v_{n}(y)+\Psi_{r}(x)\nabla_{xy}v_{n}\rvert^{2}\right)^{p}.\nonumber 
\end{align}
For the left side of (\ref{eq:ineq1}), by the definition of $\Psi_{r}$,
the H$\ddot{o}$lder inequality and the HLS inequality, we have the
following key observation
\begin{align*}
 & \mid\sum(R_{\alpha}\ast\mid v_{n}\Psi_{r}\mid^{p})\mid v_{n}\Psi_{r}\mid^{p}-\sum(R_{\alpha}\ast\mid v_{n}\mid^{p})\mid\Psi_{r}\mid^{p}\mid v_{n}\Psi_{r}\mid^{p}\mid\\
\simeq & \mid\sum\limits _{\substack{x\neq y\\
\\
}
}\frac{\left(\mid\Psi_{r}(y)\mid^{p}-\mid\Psi_{r}(x)\mid^{p}\right)\lvert v_{n}(y)\rvert^{p}\lvert v_{n}(x)\rvert^{p}\lvert\Psi_{r}(x)\rvert^{p}}{d^{S}(x,y){}^{N-\alpha}}\lvert\\
= & \mid\sum\limits _{\substack{d^{S}(x,e)\geq r+1\\
\\
}
}\sum\limits _{\substack{d^{S}(y,e)\leq r\\
\\
}
}\frac{\left(0-1\right)\lvert v_{n}(y)\rvert^{p}\lvert v_{n}(x)\rvert^{p}}{d^{S}(x,y){}^{N-\alpha}}\lvert\\
\lesssim & \sum\limits _{\substack{d^{S}(y,e)\leq r\\
\\
}
}\sum\limits _{\substack{x\neq y\\
\\
}
}\frac{\lvert v_{n}(y)\rvert^{p}\lvert v_{n}(x)\rvert^{p}}{d^{S}(x,y){}^{N-\alpha}}\\
\lesssim & \lVert R_{\alpha}\ast\lvert v_{n}\rvert^{p}\rVert_{\ell^{\frac{2N}{N-\alpha}}(\{y:d^{S}(y,e)\leq r\})}\lVert v_{n}{}^{p}\rVert_{\ell^{\frac{2N}{N+\alpha}}(\{y:d^{S}(y,e)\leq r\})}\\
\lesssim & \ensuremath{\lVert v_{n}{}^{p}\rVert_{\ell^{\frac{2N}{N+\alpha}}(G)}\lVert v_{n}{}^{p}\rVert_{\ell^{\frac{2N}{N+\alpha}}(\{y:d^{S}(y,e)\leq r\})},}
\end{align*}
which goes to zero since $v_{n}\to0$ pointwise on $G$. Thus we get
\begin{equation}
\lim\limits _{n\to\infty}\left(\sum(R_{\alpha}\ast\mid v_{n}\Psi_{r}\mid^{p})\mid v_{n}\Psi_{r}\mid^{p}-\sum(R_{\alpha}\ast\mid v_{n}\mid^{p})\mid\Psi_{r}\mid^{p}\mid v_{n}\Psi_{r}\mid^{p}\right)=0.\label{eq:claim of cutoff fun}
\end{equation}
For the right side of (\ref{eq:ineq1}), note that for any $\varepsilon>0$,
there exists $C_{\varepsilon}>0$ such that 
\[
\lvert\nabla_{xy}\Psi_{r}v_{n}(y)+\Psi_{r}(x)\nabla_{xy}v_{n})\rvert^{2}\leq C_{\varepsilon}\lvert\nabla_{xy}\Psi_{r}\lvert^{2}\lvert v_{n}(y)\lvert^{2}+(1+\varepsilon)\lvert\nabla_{xy}v_{n}\lvert^{2}\lvert\Psi_{r}(x)\lvert^{2}.
\]
Since $v_{n}\to0$ pointwise on $G$, by (\ref{eq:ineq1}), (\ref{eq:claim of cutoff fun})
and letting $\varepsilon\to0^{+}$ we obtain 
\[
\varlimsup\limits _{n\to\infty}\sum(R_{\alpha}\ast\mid v_{n}\mid^{p})\mid\Psi_{r}\mid^{p}\mid v_{n}\Psi_{r}\mid^{p}\leq K\varlimsup\limits _{n\to\infty}\left(\sum\lvert\nabla v_{n}\rvert_{2}^{2}\Psi_{r}^{2}\right)^{p}.
\]
From the definition of $\Psi_{r}$, 
\begin{equation}
\varlimsup\limits _{n\to\infty}\sum\limits _{\substack{d^{S}(x,e)>r\\
\\
}
}(R_{\alpha}\ast\mid v_{n}\mid^{p})\mid v_{n}\mid^{p}\leq K\varlimsup\limits _{n\to\infty}\left(\sum\limits _{d^{S}(x,e)>r}\lvert\nabla v_{n}\rvert_{2}^{2}\right)^{p}.\label{eq: c-c3}
\end{equation}
By Lemma \ref{lem: Brezis=002013Lieb lemma-1}, Lemma \ref{lem: Brezis=002013Lieb lemma-2}
and Remark \ref{rem:Br=0000E9zis-Lieb remark} (2),
\[
\lim\limits _{n\to\infty}\sum\limits _{d^{S}(x,e)>r}\lvert\nabla u_{n}(x)\rvert_{2}^{2}-\lim\limits _{n\to\infty}\sum\limits _{d^{S}(x,e)>r}\lvert\nabla v_{n}(x)\rvert_{2}^{2}=\sum\limits _{d^{S}(x,e)>r}\lvert\nabla u(x)\rvert_{2}^{2},
\]
\begin{align*}
\lim\limits _{n\to\infty}\sum\limits _{\substack{d^{S}(x,e)>r\\
\\
}
}(R_{\alpha}\ast\mid u_{n}\mid^{p})\mid u_{n}\mid^{p}-\lim\limits _{n\to\infty}\sum\limits _{\substack{d^{S}(x,e)>r\\
\\
}
}(R_{\alpha}\ast\mid v_{n}\mid^{p})\mid v_{n}\mid^{p}\\
=\sum\limits _{\substack{d^{S}(x,e)>r\\
\\
}
}(R_{\alpha}\ast\mid u\mid^{p})\mid u\mid^{p}.
\end{align*}
Hence by the above equalities, we get
\begin{equation}
\lim\limits _{r\to\infty}\lim\limits _{n\to\infty}\sum\limits _{d^{S}(x,e)>r}\lvert\nabla v_{n}(x)\rvert_{2}^{2}=\mu_{\infty},\label{eq:c-c4}
\end{equation}
\begin{equation}
\lim\limits _{r\to\infty}\lim\limits _{n\to\infty}\sum\limits _{\substack{d^{S}(x,e)>r\\
\\
}
}(R_{\alpha}\ast\mid v_{n}\mid^{p})\mid v_{n}\mid^{p}=\nu_{\infty}.\label{eq:c-c5}
\end{equation}
Combining the equations (\ref{eq: c-c3}), (\ref{eq:c-c4}) and (\ref{eq:c-c5}),
we get 
\[
\nu_{\infty}\leq K\mu_{\infty}^{p}.
\]

Since $u_{n}\to u$ pointwise on $G$, for every $r\geq1$, we have
\begin{align}
\varlimsup\limits _{n\to\infty}\sum\lvert\nabla u_{n}\rvert_{2}^{2} & =\varlimsup\limits _{n\to\infty}\left(\sum\Psi_{r}\lvert\nabla u_{n}\rvert_{2}^{2}+\sum(1-\Psi_{r})\lvert\nabla u_{n}\rvert_{2}^{2}\right)\label{eq:composition1-1}\\
 & =\varlimsup\limits _{n\to\infty}\sum\Psi_{r}\lvert\nabla u_{n}\rvert_{2}^{2}+\sum(1-\Psi_{r})\lvert\nabla u\rvert_{2}^{2}.\nonumber 
\end{align}
Since $\left\{ (R_{\alpha}\ast\mid u_{n}\mid^{p})\mid u_{n}\mid^{p}\right\} $
is uniformly bounded in $\ell^{1}(G)$ and by (\ref{eq:pointwise}),
\[
(R_{\alpha}\ast\mid u_{n}\mid^{p})\mid u_{n}\mid^{p}\xrightharpoonup{w^{\ast}}(R_{\alpha}\ast\mid u\mid^{p})\mid u\mid^{p}\quad\text{in}\;\ell^{1}(G).
\]
Similarly,
\begin{align}
 & \varlimsup\limits _{n\to\infty}\sum\limits _{\substack{\\
\\
}
}(R_{\alpha}\ast\mid u_{n}\mid^{p})\mid u_{n}\mid^{p}\label{eq:composition2-1}\\
 & =\varlimsup\limits _{n\to\infty}\left(\sum\Psi_{r}(R_{\alpha}\ast\mid u_{n}\mid^{p})\mid u_{n}\mid^{p}+\sum(1-\Psi_{r})(R_{\alpha}\ast\mid u_{n}\mid^{p})\mid u_{n}\mid^{p}\right)\nonumber \\
 & =\varlimsup\limits _{n\to\infty}\sum\Psi_{r}(R_{\alpha}\ast\mid u_{n}\mid^{p})\mid u_{n}\mid^{p}+\sum(1-\Psi_{r})(R_{\alpha}\ast\mid u\mid^{p})\mid u\mid^{p}.\nonumber 
\end{align}
Letting $r\to\infty$ in (\ref{eq:composition1-1}) and (\ref{eq:composition2-1}),
we obtain 
\[
\lim\limits _{n\to\infty}\sum\lvert\nabla u_{n}\rvert_{2}^{2}=\mu_{\infty}+\sum\lvert\nabla u\rvert_{2}^{2}=\mu_{\infty}+\lVert u\rVert_{D^{1,2}}^{2},
\]
\[
\lim\limits _{n\to\infty}\sum\limits _{\substack{\\
\\
}
}(R_{\alpha}\ast\mid u_{n}\mid^{p})\mid u_{n}\mid^{p}=\nu_{\infty}+\sum\limits _{\substack{\\
\\
}
}(R_{\alpha}\ast\mid u\mid^{p})\mid u\mid^{p}.
\]
\end{proof}
\begin{rem}
(1) In the continuous setting, P. L. Lions \cite{L3}, Bianchi et
al.\cite{BCS} and Ben-Naoum et al.\cite{BTW} proved that the mass
of a weakly convergent sequence can be divided into three parts, i.e.
the mass of the limit, the mass concentrated at finite points and
the loss of mass of the sequence at infinity. The corresponding parts
still satisfy the Sobolev inequalities, also see \cite[Lemma 1.40]{W96}\cite{GdEYZ20}. 

(2) The part of the mass concentrated at finite points vanish on $G$,
i.e. (\ref{eq:nu=00003D00003D00003D00003D00003D00003D0}) and (\ref{eq:mu=00003D00003D00003D00003D00003D00003D0}),
which may be not true in the continuous setting. For example, consider
the sequence of probability measures $\left\{ \delta_{n}\right\} $
in $[0,1]$, where $\delta_{n}(x)\coloneqq n\chi_{[0,\frac{1}{n}]}\textrm{d}x$,
then $\delta_{n}\to0$ almost everywhere in $[0,1]$. However, $\delta_{n}\xrightharpoonup{w^{\ast}}\delta_{0}$
in $\left(C[0,1]\right)^{\ast}$ and the Dirac measure $\delta_{0}$
is non-zero. This is the advantage of the discrete setting. 

(3) The part of the loss of mass at infinity satisfies the inequality
(\ref{eq:infinite}) which may be not true in the continuous setting,
since the claim (\ref{eq:claim of cutoff fun}) only holds for smooth
compact support functions $\Psi_{r}\in C_{0}^{\infty}(\mathbb{R}^{N})$,
see \cite[(2.8)]{GdEYZ20}. The key point is that the convergence
pointwise in a bounded domain of $G$ yields $\ell^{1}$ convergence
which may be not true on $\mathbb{R}^{N}$, and $\delta_{n}(x)$ in
(2) is a counter example. 
\end{rem}

Next, we prove that the maximizing sequence after translation has
a uniform positive lower bound at the unit element $e\in G$. This
is crucial to rule out the vanishing case of the limit function in
supercritical cases. 
\begin{lem}
\label{lem:lower bound}For $N\geq3$, $\alpha\in(0,N)$, $p>\frac{N+\alpha}{N-2}$,
let $\left\{ u_{n}\right\} \subset D^{1,2}(G)$ be a maximizing sequence
satisfying (\ref{eq:max seq}). Then $\varliminf\limits _{n\to\infty}\lVert\nabla u_{n}\rVert_{\ell^{\infty}}=\varliminf\limits _{n\to\infty}\underset{x}{\text{sup}}\underset{y\sim x}{\sum}\mid\nabla_{xy}u_{n}\mid>0$.
\end{lem}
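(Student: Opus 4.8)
The plan is to argue by contradiction, turning the supercriticality $p>\tfrac{N+\alpha}{N-2}$ into an $\ell^{q}$ interpolation gain and controlling $\lVert u_n\rVert_{\ell^{\infty}}$ by $\lVert\nabla u_n\rVert_{\ell^{\infty}}$ via a telescoping estimate along geodesics combined with the polynomial volume growth of $G$. Suppose the conclusion fails, so that $\varliminf_{n\to\infty}\lVert\nabla u_n\rVert_{\ell^{\infty}}=0$; passing to a subsequence I may assume $\varepsilon_n:=\lVert\nabla u_n\rVert_{\ell^{\infty}}\to0$. Since the maximizing condition (\ref{eq:max seq}) forces the Choquard energy $\sum_{G}(R_{\alpha}\ast\lvert u_n\rvert^{p})\lvert u_n\rvert^{p}\to K$, and $K>0$ (it is the supremum in (\ref{eq: sup}), which is attained as a positive value on, e.g., any finitely supported $u$ whose support meets two adjacent vertices), it suffices to show that $\varepsilon_n\to0$ drives the energy to $0$, yielding the desired contradiction.

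First I would record the interpolation step. Write $q:=\tfrac{2Np}{N+\alpha}$ and $2^{\ast}:=\tfrac{2N}{N-2}$; the strict inequality $q>2^{\ast}$ is \emph{equivalent} to $p>\tfrac{N+\alpha}{N-2}$, so supercriticality is exactly what I need. The elementary bound $\lVert u_n\rVert_{q}^{q}\le\lVert u_n\rVert_{\ell^{\infty}}^{\,q-2^{\ast}}\lVert u_n\rVert_{2^{\ast}}^{2^{\ast}}$ together with the discrete Sobolev inequality (\ref{eq:discrete sobo}) (taken with exponent $2$, so that $\lVert u_n\rVert_{2^{\ast}}\le C\lVert u_n\rVert_{D^{1,2}}=C$) gives $\lVert u_n\rVert_{q}^{2p}\le C'\,\lVert u_n\rVert_{\ell^{\infty}}^{\theta}$ for the positive exponent $\theta:=(q-2^{\ast})\,\tfrac{2p}{q}>0$. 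Feeding this into the Sobolev-type inequality (\ref{eq: main inequality}) yields
\[
\sum_{G}\bigl(R_{\alpha}\ast\lvert u_n\rvert^{p}\bigr)\lvert u_n\rvert^{p}\;\le\;C_{p,\alpha}\lVert u_n\rVert_{q}^{2p}\;\le\;C''\,\lVert u_n\rVert_{\ell^{\infty}}^{\theta},
\]
so the whole matter reduces to proving $M_n:=\lVert u_n\rVert_{\ell^{\infty}}\to0$.

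The hard part is this last reduction, because on a graph there is no maximum-principle estimate bounding $\lVert u\rVert_{\ell^{\infty}}$ by $\lVert\nabla u\rVert_{\ell^{\infty}}$ directly; what saves the argument is the decay of $u_n$ at infinity. Since $u_n\in D^{1,2}(G)\subset\ell^{2^{\ast}}(G)$, one has $u_n(x)\to0$ as $x\to\infty$, so $M_n$ is attained at some vertex $x_n$. Each single-edge gradient satisfies $\lvert\nabla_{xy}u_n\rvert\le\sum_{z\sim x}\lvert\nabla_{xz}u_n\rvert\le\varepsilon_n$, hence telescoping $\lvert u_n\rvert$ along a geodesic of length $k$ from $x_n$ gives $\bigl\lvert\,\lvert u_n(x)\rvert-M_n\bigr\rvert\le k\varepsilon_n$, so $\lvert u_n(x)\rvert\ge M_n/2$ on the ball $B_{x_n}^{S}(R_n)$ with $R_n:=\lfloor M_n/(2\varepsilon_n)\rfloor$. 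If $M_n<4\varepsilon_n$ then $M_n\to0$ is immediate; otherwise $R_n\ge M_n/(4\varepsilon_n)$, and the lower polynomial volume bound $\lvert B_{x_n}^{S}(R_n)\rvert=\beta_{S}(R_n)\ge C_{1}(S)R_n^{N}$ forces
\[
C\ge\lVert u_n\rVert_{2^{\ast}}^{2^{\ast}}\;\ge\;\lvert B_{x_n}^{S}(R_n)\rvert\,\Bigl(\tfrac{M_n}{2}\Bigr)^{2^{\ast}}\;\gtrsim\;R_n^{N}M_n^{2^{\ast}}\;\gtrsim\;M_n^{\,N+2^{\ast}}\varepsilon_n^{-N},
\]
so that $M_n\lesssim\varepsilon_n^{\,N/(N+2^{\ast})}\to0$. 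In either case $M_n\to0$, which by the previous paragraph sends the energy to $0$, contradicting $K>0$. Therefore $\varliminf_{n\to\infty}\lVert\nabla u_n\rVert_{\ell^{\infty}}>0$. I expect the telescoping-plus-volume step to be the delicate one, since it is precisely where the discrete geometry (bounded degree, and the sharp lower volume growth $\beta_{S}(n)\gtrsim n^{N}$) must convert a small gradient into a small amplitude.
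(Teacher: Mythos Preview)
Your argument is correct, but the paper takes a shorter and more direct route. Instead of interpolating the \emph{function} norm $\lVert u_n\rVert_{q}$ between $\lVert u_n\rVert_{2^{\ast}}$ and $\lVert u_n\rVert_{\ell^{\infty}}$ and then spending a separate step (telescoping along geodesics plus the lower volume bound $\beta_{S}(n)\gtrsim n^{N}$) to deduce $\lVert u_n\rVert_{\ell^{\infty}}\to0$ from $\lVert\nabla u_n\rVert_{\ell^{\infty}}\to0$, the paper interpolates at the \emph{gradient} level. Concretely, supercriticality $p>\tfrac{N+\alpha}{N-2}$ is rephrased as $\tfrac{2Np}{N+\alpha}=r^{\ast}$ for some $r>2$; one then picks $q\in(2,r)$, uses the supercritical Sobolev inequality $\lVert u\rVert_{2Np/(N+\alpha)}\lesssim\lVert\nabla u\rVert_{q}$, and interpolates $\lVert\nabla u_n\rVert_{q}^{q}\le\lVert\nabla u_n\rVert_{2}^{2}\lVert\nabla u_n\rVert_{\ell^{\infty}}^{q-2}=\lVert\nabla u_n\rVert_{\ell^{\infty}}^{q-2}$. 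This gives the lower bound on $\lVert\nabla u_n\rVert_{\ell^{\infty}}$ in one line, without ever passing through $\lVert u_n\rVert_{\ell^{\infty}}$. Your approach has the virtue of making the role of the polynomial volume growth explicit (a discrete Morrey-type estimate), but the paper's version avoids that detour entirely by choosing the right Sobolev exponent.
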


\begin{proof}
Since $p>\frac{N+\alpha}{N-2}$ yields $\frac{2Np}{N+\alpha}>2^{\ast}$,
let $r^{\ast}\coloneqq\frac{2Np}{N+\alpha}>2^{\ast}$, hence $r>2$
and $r=\frac{2Np}{N+\alpha+2p}\in(\frac{Np}{N+p},\frac{2Np}{N+2p})$.
Choosing $q$ such that $2<q<r<\infty$, by the interpolation inequality
we have 
\[
C_{\alpha,p,q}\left(\sum(R_{\alpha}\ast\mid u_{n}\mid^{p})\mid u_{n}\mid^{p}\right)^{\frac{q}{2p}}\leq\lVert\nabla u_{n}\rVert_{q}^{q}\leq\lVert\nabla u_{n}\rVert_{2}^{2}\lVert\nabla u_{n}\rVert_{\infty}^{q-2}\leq\lVert\nabla u_{n}\rVert_{\infty}^{q-2},
\]
where $C_{\alpha,p,q}$ is the constant in the inequality (\ref{eq: main inequality}).

By taking the limit, we obtain 
\[
C_{\alpha,p,q}K^{\frac{q}{2p}}\leq\varliminf\limits _{n\to\infty}\lVert\nabla u_{n}\rVert_{\infty}^{q-2}.
\]
This proves the lemma. 
\end{proof}
\begin{rem}
\label{rem:translation sequence}The maximum of $\lvert\nabla u_{n}(x)\rvert$
is attainable since $\lVert\nabla u_{n}\rVert_{2}=1$. Define $v_{n}\left(x\right):=u_{n}(x_{n}x)$,
where $\lvert\nabla u_{n}(x_{n})\rvert=\max\limits _{x}\lvert\nabla u_{n}(x)\rvert$.
Then the translation sequence $\left\{ v_{n}\right\} $ is uniformly
bounded in $D^{1,2}(G)$, $\lVert\nabla v_{n}\rVert_{2}=1$ and $\lvert\nabla v_{n}(e)\rvert=\lVert\nabla u_{n}\rVert_{\infty}$,
where $e$ is the unit element of $G$. By Lemma \ref{lem:lower bound},
passing to a subsequence if necessary, we have 
\[
v_{n}\to v\quad\;\textrm{pointwise\;on}\;G,
\]
\begin{equation}
\lvert\nabla v(e)\rvert=\varliminf\limits _{n\to\infty}\lVert\nabla u_{n}\rVert_{\ell^{\infty}}>0.\label{eq:v(0)>0}
\end{equation}
\end{rem}

\section{Proofs for Theorem \ref{thm:main1} and Corollary \ref{cor: Coro}}

In this section, we will give two proofs for Theorem \ref{thm:main1}
and prove Corollary \ref{cor: Coro}. 
\begin{proof}[Proof %
\mbox{%
I%
} of Theorem \ref{thm:main1}]
Let $\left\{ u_{n}\right\} \subset D^{1,2}(G)$ be a maximizing sequence
satisfying (\ref{eq:max seq}). And the translation sequence $\left\{ v_{n}\right\} $
is defined in Remark \ref{rem:translation sequence}.

By equalities (\ref{eq:composition2}) and (\ref{eq:composition1})
in Lemma \ref{lem:Concentration-Compact}, passing to a subsequence
if necessary, we get 
\[
1=\lim\limits _{n\to\infty}\lVert v_{n}\rVert_{D^{1,2}}^{2}=\lVert v\rVert_{D^{1,2}}^{2}+\mu_{\infty},
\]
\[
K=\lim\limits _{n\to\infty}\sum(R_{\alpha}\ast\mid v_{n}\mid^{p})\mid v_{n}\mid^{p}=\sum(R_{\alpha}\ast\mid v\mid^{p})\mid v\mid^{p}+\nu_{\infty}.
\]
By the Sobolev type inequality (\ref{eq: main inequality}), (\ref{eq:infinite})
and the inequality 
\begin{equation}
\left(a+b\right)^{p}\geq a^{p}+b^{p}\text{,}\;\forall a,b\geq0,\label{eq:trivial inequality}
\end{equation}
we get 
\begin{align*}
K & =\sum(R_{\alpha}\ast\mid v\mid^{p})\mid v\mid^{p}+\nu_{\infty}\\
 & \leq K(\lVert v\rVert_{D^{1,2}}^{2p}+\mu_{\infty}^{p})\\
 & \leq K(\lVert v\rVert_{D^{1,2}}^{2}+\mu_{\infty})^{p}=K.
\end{align*}
Since $\left(a+b\right)^{p}>a^{p}+b^{p}$ unless $a=0$ or $b=0$,
we deduce from (\ref{eq:v(0)>0}) that $\lVert v\rVert_{D^{1,2}}^{2}=1$
and $\mu_{\infty}=0.$ Hence $\nu_{\infty}=0$ which yields 
\[
\sum(R_{\alpha}\ast\mid v\mid^{p})\mid v\mid^{p}=K.
\]
That is, $v$ is a maximizer. 
\end{proof}
Next, we give another proof for Theorem \ref{thm:main1} using the
discrete Brézis-Lieb lemma. 
\begin{proof}[Proof %
\mbox{%
II%
} of Theorem \ref{thm:main1}]
Using Lemma \ref{lem:lower bound}, by the translation and taking
a subsequence if necessary as before, we can get a maximizing sequence
$\left\{ u_{n}\right\} $ satisfying (\ref{eq:max seq}), $u_{n}\to u$
pointwise on $G$, and $\lvert\nabla u(e)\rvert>0$.

By Lemma \ref{lem: Brezis=002013Lieb lemma-1}, Lemma \ref{lem: Brezis=002013Lieb lemma-2},
the inequalities (\ref{eq:trivial inequality}) and (\ref{eq: main inequality}),
then passing to a subsequence if necessary, we have 
\begin{align}
K & =\underset{n\rightarrow\infty}{\lim}\sum(R_{\alpha}\ast\mid u_{n}\mid^{p})\mid u_{n}\mid^{p}\nonumber \\
 & =\underset{n\rightarrow\infty}{\lim}\frac{\sum(R_{\alpha}\ast\mid u_{n}\mid^{p})\mid u_{n}\mid^{p}}{\lVert u_{n}\rVert_{D^{1,2}}^{2p}}\nonumber \\
 & =\underset{n\rightarrow\infty}{\overline{\lim}}\frac{\sum(R_{\alpha}\ast\mid u_{n}-u\mid^{p})\mid u_{n}-u\mid^{p}+\sum(R_{\alpha}\ast\mid u\mid^{p})\mid u\mid^{p}}{\left(\lVert u_{n}-u\rVert_{D^{1,2}}^{2}+\lVert u\rVert_{D^{1,2}}^{2}\right)^{p}}\label{eq: proof2}\\
 & \leq\underset{n\rightarrow\infty}{\overline{\lim}}\frac{\sum(R_{\alpha}\ast\mid u_{n}-u\mid^{p})\mid u_{n}-u\mid^{p}+\sum(R_{\alpha}\ast\mid u\mid^{p})\mid u\mid^{p}}{\lVert u_{n}-u\rVert_{D^{1,2}}^{2p}+\lVert u\rVert_{D^{1,2}}^{2p}}\nonumber \\
 & \leq\underset{n\rightarrow\infty}{\overline{\lim}}\frac{K\lVert u_{n}-u\rVert_{D^{1,2}}^{2p}+\sum(R_{\alpha}\ast\mid u\mid^{p})\mid u\mid^{p}}{\lVert u_{n}-u\rVert_{D^{1,2}}^{2p}+\lVert u\rVert_{D^{1,2}}^{2p}}.\nonumber 
\end{align}
Since $\lVert u\rVert_{D^{1,2}}\neq0$, we have that 
\[
\sum(R_{\alpha}\ast\mid u\mid^{p})\mid u\mid^{p}\geq K\lVert u\rVert_{D^{1,2}}^{2p},
\]
which yields 
\[
\sum(R_{\alpha}\ast\mid u\mid^{p})\mid u\mid^{p}=K\lVert u\rVert_{D^{1,2}}^{2p}.
\]
By (\ref{eq: proof2}), passing to a subsequence, we get
\[
\underset{n\rightarrow\infty}{\lim}\sum(R_{\alpha}\ast\mid u_{n}-u\mid^{p})\mid u_{n}-u\mid^{p}=K\underset{n\rightarrow\infty}{\lim}\lVert u_{n}-u\rVert_{D^{1,2}}^{2p}.
\]
Since $0<\lVert u\rVert_{D^{1,2}}\leq\underset{n\rightarrow\infty}{\lim}\lVert u_{n}\rVert_{D^{1,2}}=1$,
it suffices to show that $\lVert u\rVert_{D^{1,2}}=1$. Suppose that
it is not true, i.e. $0<\lVert u\rVert_{D^{1,2}}=D<1$, then by Lemma
\ref{lem: Brezis=002013Lieb lemma-1}, 
\[
\underset{n\rightarrow\infty}{\lim}\lVert u_{n}-u\rVert_{D^{1,2}}^{2}=\underset{n\rightarrow\infty}{\lim}\lVert u_{n}\rVert_{D^{1,2}}^{2}-\lVert u\rVert_{D^{1,2}}^{2}=1-D^{2}>0.
\]
However, $\left(a+b\right)^{p}>a^{p}+b^{p}$ if $a,$ $b>0$. This
yields a contradiction by (\ref{eq: proof2}).

Thus, $\lVert u\rVert_{D^{1,2}}=1$ and $u$ is a maximizer. 
\end{proof}
Finally we prove Corollary \ref{cor: Coro}. 
\begin{proof}[Proof of Corollary \ref{cor: Coro}]
 By Theorem \ref{thm:main1} there exists a maximizer $u$ for $K$.
Replacing $u$ by $\lvert u\rvert$, we know that $\lvert u\rvert$
is still a maximizer. Therefore, we get a non-negative maximizer $u$.
Define $I(u)\coloneqq\sum(R_{\alpha}\ast\mid u\mid^{p})\mid u\mid^{p}-\lambda\sum\mid\nabla u\mid^{2}$,
and it follows from the Lagrange multiplier that $u$ is a non-negative
solution of the equation (\ref{eq: choquard eq2}). The maximum principle
yields that $u$ is positive. 

Let $v\coloneqq R_{\alpha}\ast u{}^{p}$, which satisfies $\left(-\Delta\right)^{\frac{\alpha}{2}}v=u^{p}$.
Hence $(u,v)$ is a positive solution of the system (\ref{eq:R_a system}).
\end{proof}
By the equivalent form of  HLS inequality (\ref{eq:hls2}) we obtain
another inequality 
\[
\lVert R_{\alpha}\ast\mid u\mid^{p}\lVert_{^{\frac{2N}{N-\alpha}}}\lesssim\lVert u{}^{p}\lVert_{^{\frac{2N}{N+\alpha}}}\lesssim\lVert\nabla u\lVert_{2}^{p},\;\forall u\in D^{1,2}(G).
\]
Then define its variational problem, and by the same argument as before
we can prove Remark \ref{rem:equivalent form of HLS}.

\section{Proof for Theorem \ref{thm: main2}}

In this section, we generalize the existence results to the Choquard
type equations with $p$-Laplace, biharmonic and $p$-biharmonic operators.
By the Concentration-Compactness principle, we can prove the existence
of the extremal functions using the idea of proof I. Here we omit
the proof I and give the proof II below for brevity.
\begin{proof}[Proof of (\ref{eq: p-laplace})]
For $N\geq3$, $\alpha\in(0,N-2)$, $1\leq p<\frac{N-\alpha}{2}$,
then $\frac{2Np}{N+\alpha}>p^{\ast}=\frac{Np}{N-p}$, by the Sobolev
inequality (\ref{eq:discrete sobo}) and the HLS inequality (\ref{eq: hls1})
we get
\begin{align}
\sum_{G}\left(R_{\alpha}\ast\mid u\mid^{p}\right) & \mid u\mid^{p}\leq C_{p,\alpha}\left(\sum_{G}\mid u\mid^{\frac{2Np}{N+\alpha}}\right)^{\frac{N+\alpha}{N}}\label{eq: main inequality-2}\\
 & \leq C_{p,\alpha}\tilde{C}_{\frac{2Np}{N+\alpha},p}\left(\sum_{G}\mid\nabla u\mid^{p}\right)^{2},\;\forall u\in D^{1,p}(G).\nonumber 
\end{align}
The optimal constant in the inequality (\ref{eq: main inequality-2})
is given by 
\begin{equation}
K_{2}:=\underset{\lVert u\rVert_{D^{1,p}}=1}{\text{sup}}\sum_{G}\left(R_{\alpha}\ast\mid u\mid^{p}\right)\mid u\mid^{p}.\label{eq: sup-1}
\end{equation}
Consider a maximizing sequence $\{u_{n}\}\subset D^{1,p}(G)$ satisfying
\begin{equation}
\lVert u_{n}\rVert_{D^{1,p}}=1,\;\sum_{G}\left(R_{\alpha}\ast\mid u_{n}\mid^{p}\right)\mid u_{n}\mid^{p}\to K_{2},n\to\infty.\label{eq:max seq-1}
\end{equation}
Since $p<\frac{N-\alpha}{2}$ yields $\frac{2Np}{N+\alpha}\eqqcolon r^{\ast}>p^{\ast}$,
that is $r=\frac{Np}{N+\alpha+p}\in\left(\frac{Np}{2N+p},\frac{Np}{N+p}\right)$,
choose $q$ such that $p<q<r<\infty$. By the interpolation inequality
we have 
\[
C_{\alpha,p,q}\left(\sum(R_{\alpha}\ast\mid u_{n}\mid^{p})\mid u_{n}\mid^{p}\right)^{\frac{q}{2p}}\leq\lVert\nabla u_{n}\rVert_{q}^{q}\leq\lVert\nabla u_{n}\rVert_{p}^{p}\lVert\nabla u_{n}\rVert_{\infty}^{q-p}\leq\lVert\nabla u_{n}\rVert_{\infty}^{q-p},
\]
where $C_{\alpha,p,q}$ is the constant in the inequality (\ref{eq: main inequality-2}).

Taking the limit, we obtain 
\[
C_{\alpha,p,q}K_{2}^{\frac{q}{2p}}\leq\varliminf\limits _{n\to\infty}\lVert\nabla u_{n}\rVert_{\infty}^{q-p}.
\]
This proves $\varliminf\limits _{n\to\infty}\lVert\nabla u_{n}\rVert_{\ell^{\infty}}>0$.

By the translation and taking a subsequence if necessary as Remark
\ref{rem:translation sequence}, we can get a maximizing sequence
$\left\{ u_{n}\right\} $ satisfying (\ref{eq:max seq-1}), $u_{n}\to u$
pointwise on $G$, and $\lvert\nabla u(e)\rvert>0$.

By Lemma \ref{lem: Brezis=002013Lieb lemma-1}, Lemma \ref{lem: Brezis=002013Lieb lemma-2},
the inequalities (\ref{eq:trivial inequality}) and  (\ref{eq: main inequality-2}),
then passing to a subsequence if necessary, we have 
\begin{align}
K_{2} & =\underset{n\rightarrow\infty}{\lim}\sum(R_{\alpha}\ast\mid u_{n}\mid^{p})\mid u_{n}\mid^{p}\nonumber \\
 & =\underset{n\rightarrow\infty}{\lim}\frac{\sum(R_{\alpha}\ast\mid u_{n}\mid^{p})\mid u_{n}\mid^{p}}{\lVert u_{n}\rVert_{D^{1,p}}^{2p}}\nonumber \\
 & =\underset{n\rightarrow\infty}{\overline{\lim}}\frac{\sum(R_{\alpha}\ast\mid u_{n}-u\mid^{p})\mid u_{n}-u\mid^{p}+\sum(R_{\alpha}\ast\mid u\mid^{p})\mid u\mid^{p}}{\left(\lVert u_{n}-u\rVert_{D^{1,p}}^{p}+\lVert u\rVert_{D^{1,p}}^{p}\right)^{2}}\label{eq: proof2-1}\\
 & \leq\underset{n\rightarrow\infty}{\overline{\lim}}\frac{\sum(R_{\alpha}\ast\mid u_{n}-u\mid^{p})\mid u_{n}-u\mid^{p}+\sum(R_{\alpha}\ast\mid u\mid^{p})\mid u\mid^{p}}{\lVert u_{n}-u\rVert_{D^{1,p}}^{2p}+\lVert u\rVert_{D^{1,p}}^{2p}}\nonumber \\
 & \leq\underset{n\rightarrow\infty}{\overline{\lim}}\frac{K_{2}\lVert u_{n}-u\rVert_{D^{1,p}}^{2p}+\sum(R_{\alpha}\ast\mid u\mid^{p})\mid u\mid^{p}}{\lVert u_{n}-u\rVert_{D^{1,p}}^{2p}+\lVert u\rVert_{D^{1,p}}^{2p}}.\nonumber 
\end{align}
Since $\lVert u\rVert_{D^{1,p}}\neq0$, we have that 
\[
\sum(R_{\alpha}\ast\mid u\mid^{p})\mid u\mid^{p}\geq K_{2}\lVert u\rVert_{D^{1,p}}^{2p},
\]
which yields
\[
\sum(R_{\alpha}\ast\mid u\mid^{p})\mid u\mid^{p}=K_{2}\lVert u\rVert_{D^{1,p}}^{2p}.
\]
By (\ref{eq: proof2-1}), passing to a subsequence, we get
\[
\underset{n\rightarrow\infty}{\lim}\sum(R_{\alpha}\ast\mid u_{n}-u\mid^{p})\mid u_{n}-u\mid^{p}=K_{2}\underset{n\rightarrow\infty}{\lim}\lVert u_{n}-u\rVert_{D^{1,p}}^{2p}.
\]
Since $0<\lVert u\rVert_{D^{1,p}}\leq\underset{n\rightarrow\infty}{\lim}\lVert u_{n}\rVert_{D^{1,p}}=1$,
it suffices to show that $\lVert u\rVert_{D^{1,p}}=1$. Suppose that
it is not true, i.e. $0<\lVert u\rVert_{D^{1,p}}=D<1$, then by Lemma
\ref{lem: Brezis=002013Lieb lemma-1}, 
\[
\underset{n\rightarrow\infty}{\lim}\lVert u_{n}-u\rVert_{D^{1,p}}^{p}=\underset{n\rightarrow\infty}{\lim}\lVert u_{n}\rVert_{D^{1,p}}^{p}-\lVert u\rVert_{D^{1,p}}^{p}=1-D^{p}>0.
\]
However, $\left(a+b\right)^{2}>a^{2}+b^{2}$ if $a,$ $b>0$. This
yields a contradiction by (\ref{eq: proof2-1}).

Thus, $\lVert u\rVert_{D^{1,p}}=1$ and $u$ is a maximizer. Replacing
$u$ by $\lvert u\rvert$, we know that $\lvert u\rvert$ is still
a maximizer. Therefore, we get a non-negative maximizer $u$. It follows
from the Lagrange multiplier that $u$ is a non-negative solution
of (\ref{eq: p-laplace}). Moreover, for $p>1$, the maximum principle
yields that $u$ is positive. 
\end{proof}
In \cite[Theorem 10]{HLM22}, by the boundedness of Riesz transforms
we prove the discrete second-order Sobolev inequality (\ref{eq: second-order sobo}).
\begin{lem}
For $N\geq3,1<p<\dfrac{N}{2},p^{\ast\ast}\coloneqq\dfrac{Np}{N-2p}$,
\begin{equation}
\lVert u\rVert_{\ell^{p^{\ast\ast}}}\leq C_{p}\lVert u\rVert_{D^{2,p}},\;\forall u\in D^{2,p}(G).\label{eq: second-order sobo}
\end{equation}
\end{lem}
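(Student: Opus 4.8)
The plan is to deduce the second-order inequality (\ref{eq: second-order sobo}) directly from the Hardy--Littlewood--Sobolev inequality (\ref{eq:hls2-1}) of Theorem \ref{thm: hls on G}, specialized to $\alpha=2$. The key observation is that for $\alpha=2$ the Green's function reduces to
\[
R_{2}(x,y)=\frac{1}{\Gamma(1)}\int_{0}^{\infty}k_{t}(x,y)\,\mathrm{d}t=\int_{0}^{\infty}k_{t}(x,y)\,\mathrm{d}t,
\]
which is precisely the Green's function of $-\Delta$ on $G$, so that $-\Delta\bigl(R_{2}\ast f\bigr)=f$ for finitely supported $f$ and hence $u=R_{2}\ast(-\Delta u)$ for suitable $u$. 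The hypotheses $1<p<N/2$ are exactly the admissibility conditions $1<r<N/\alpha$ in (\ref{eq:hls2-1}) with $r=p$ and $\alpha=2$, and the target exponent there is $t=\frac{Np}{N-2p}=p^{\ast\ast}$.

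First I would establish the representation $u=R_{2}\ast(-\Delta u)$. For $u\in C_{0}(G)$ this is immediate: since $-\Delta_{x}R_{2}(x,y)=\delta_{x}(y)$ one gets $-\Delta\bigl(R_{2}\ast(-\Delta u)\bigr)=-\Delta u$, so $R_{2}\ast(-\Delta u)$ and $u$ differ by a harmonic function $h$; because $R_{2}\ast(-\Delta u)$ decays like the Green's function and $u$ has finite support, $h\to 0$ at infinity, and the maximum principle on the infinite connected graph $G$ forces $h\equiv 0$. Granting this, I apply (\ref{eq:hls2-1}) with $f=-\Delta u$, $r=p$, $t=p^{\ast\ast}$ to obtain
\[
\lVert u\rVert_{\ell^{p^{\ast\ast}}}=\lVert R_{2}\ast(-\Delta u)\rVert_{\ell^{p^{\ast\ast}}}\leq C_{p,2}\lVert{-\Delta u}\rVert_{\ell^{p}}=C_{p}\lVert u\rVert_{D^{2,p}},
\]
which is (\ref{eq: second-order sobo}). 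To pass from $C_{0}(G)$ to all of $D^{2,p}(G)$ I would invoke the identity $D_{0}^{2,p}(G)=D^{2,p}(G)$ recorded in Section~2, approximate in the $D^{2,p}$ norm, and use the lower semicontinuity of the $\ell^{p^{\ast\ast}}$ norm under pointwise limits (Fatou).

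The main obstacle is precisely the justification of the Green's function representation $u=R_{2}\ast(-\Delta u)$ for general $u\in D^{2,p}(G)$, since the kernel $R_{2}$ decays only like $d^{S}(x,y)^{2-N}$ and one must rule out a nonzero harmonic component of the convolution. This is cleanly handled on the dense subspace $C_{0}(G)$, where every sum is finite and the uniqueness of decaying harmonic functions applies; the density argument then transfers the estimate to the whole space. As an alternative following \cite{HLM22} that avoids the HLS machinery, one iterates the first-order inequality (\ref{eq:discrete sobo}): applied to $u$ with base exponent $p^{\ast}$ and to $\lvert\nabla u\rvert$ with base exponent $p$, and using $(p^{\ast})^{\ast}=p^{\ast\ast}$, it yields $\lVert u\rVert_{\ell^{p^{\ast\ast}}}\lesssim\lVert\nabla u\rVert_{\ell^{p^{\ast}}}\lesssim\lVert\nabla^{2}u\rVert_{\ell^{p}}$, after which the $\ell^{p}$-boundedness of the discrete Riesz transforms replaces $\nabla^{2}u$ by $\Delta u$. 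The route through (\ref{eq:hls2-1}) is shorter, since it reaches the sharp exponent $p^{\ast\ast}$ in a single step.
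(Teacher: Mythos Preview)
Your main route is correct, and it is not the one the paper takes. The paper does not actually prove this lemma: it merely cites \cite[Theorem~10]{HLM22} and records that the argument there proceeds ``by the boundedness of Riesz transforms'' --- which is essentially the alternative you sketch at the end. Your primary approach, by contrast, is a genuine shortcut that stays entirely within the machinery of the present paper: specializing Theorem~\ref{thm: hls on G} to $\alpha=2$ turns $R_{2}$ into the ordinary Green's function of $-\Delta$ (immediate from the heat-kernel integral), and the representation $u=R_{2}\ast(-\Delta u)$ on $C_{0}(G)$ follows from the decay $R_{2}(x,y)\simeq d^{S}(x,y)^{2-N}$ (here $N\geq3$ is used) together with the maximum principle on the connected graph $G$. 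The density step via $D_{0}^{2,p}(G)=D^{2,p}(G)$ and Fatou is routine. What your approach buys is self-containment: no appeal to the $\ell^{p}$-boundedness of discrete Riesz transforms is needed, whereas the cited argument in \cite{HLM22} must import that nontrivial result. What the Riesz-transform route buys is the stronger intermediate estimate $\lVert\nabla u\rVert_{\ell^{p^{\ast}}}\lesssim\lVert\Delta u\rVert_{\ell^{p}}$, which your HLS argument bypasses entirely.
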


Then we are ready to prove (\ref{eq: bi-harmonic}).
\begin{proof}[Proof of (\ref{eq: bi-harmonic})]
 For $N\geq5$, $\alpha\in(0,N)$, $p>\frac{N+\alpha}{N-4}$, then
$\frac{2Np}{N+\alpha}>2^{\ast\ast}=\frac{2N}{N-4}$, by (\ref{eq: second-order sobo})
and the HLS inequality (\ref{eq: hls1}) we get 
\begin{align}
\sum_{G}\left(R_{\alpha}\ast\mid u\mid^{p}\right) & \mid u\mid^{p}\leq C_{p,\alpha}\left(\sum_{G}\mid u\mid^{\frac{2Np}{N+\alpha}}\right)^{\frac{N+\alpha}{N}}\label{eq: main inequality-3}\\
 & \leq C_{p,\alpha}\tilde{C}_{\frac{2Np}{N+\alpha}}\left(\sum_{G}\mid\Delta u\mid^{2}\right)^{p},\;\forall u\in D^{2,2}(G).\nonumber 
\end{align}

The optimal constant in the inequality (\ref{eq: main inequality-3})
is given by 
\begin{equation}
K_{3}:=\underset{\lVert u\rVert_{D^{2,2}}=1}{\text{sup}}\sum_{G}\left(R_{\alpha}\ast\mid u\mid^{p}\right)\mid u\mid^{p}.\label{eq: sup-3}
\end{equation}
And consider the maximizing sequence $\{u_{n}\}\subset D^{2,2}(G)$
satisfying

\begin{equation}
\lVert u_{n}\rVert_{D^{2,2}}=1,\;\sum_{G}\left(R_{\alpha}\ast\mid u_{n}\mid^{p}\right)\mid u_{n}\mid^{p}\to K_{3},n\to\infty.\label{eq:max seq-3}
\end{equation}
Since $p>\frac{N+\alpha}{N-4}$ yields $\frac{2Np}{N+\alpha}\eqqcolon r^{\ast\ast}>2^{\ast\ast}$,
that is $r=\frac{2Np}{N+\alpha+4p}\in\left(\frac{Np}{N+2p},\frac{2Np}{N+4p}\right)$,
we can choose $q$ such that $2<q<r<\infty$. By the interpolation
inequality we have 
\[
C_{\alpha,p,q}\left(\sum(R_{\alpha}\ast\mid u_{n}\mid^{p})\mid u_{n}\mid^{p}\right)^{\frac{q}{2p}}\leq\lVert\Delta u_{n}\rVert_{q}^{q}\leq\lVert\Delta u_{n}\rVert_{2}^{2}\lVert\Delta u_{n}\rVert_{\infty}^{q-2}\leq\lVert\Delta u_{n}\rVert_{\infty}^{q-2},
\]
where $C_{\alpha,p,q}$ is the constant in the inequality (\ref{eq: main inequality-3}).

Taking the limit, we obtain 
\[
C_{\alpha,p,q}K_{3}^{\frac{q}{2p}}\leq\varliminf\limits _{n\to\infty}\lVert\Delta u_{n}\rVert_{\infty}^{q-2}.
\]
This proves $\varliminf\limits _{n\to\infty}\lVert\Delta u_{n}\rVert_{\ell^{\infty}}>0$.

By the translation and taking a subsequence if necessary as Remark
\ref{rem:translation sequence}, we can get a maximizing sequence
$\left\{ u_{n}\right\} $ satisfying (\ref{eq:max seq-3}), $u_{n}\to u$
pointwise on $G$, and $\lvert\Delta u(e)\rvert>0$.

By Lemma \ref{lem: Br=0000E9zis-Lieb lemma-3}, Lemma \ref{lem: Brezis=002013Lieb lemma-2},
the inequalities (\ref{eq:trivial inequality}) and (\ref{eq: main inequality-3}),
then passing to a subsequence if necessary, we have 
\begin{align}
K & _{3}=\underset{n\rightarrow\infty}{\lim}\sum(R_{\alpha}\ast\mid u_{n}\mid^{p})\mid u_{n}\mid^{p}\nonumber \\
 & =\underset{n\rightarrow\infty}{\lim}\frac{\sum(R_{\alpha}\ast\mid u_{n}\mid^{p})\mid u_{n}\mid^{p}}{\lVert u_{n}\rVert_{D^{2,2}}^{2p}}\nonumber \\
 & =\underset{n\rightarrow\infty}{\overline{\lim}}\frac{\sum(R_{\alpha}\ast\mid u_{n}-u\mid^{p})\mid u_{n}-u\mid^{p}+\sum(R_{\alpha}\ast\mid u\mid^{p})\mid u\mid^{p}}{\left(\lVert u_{n}-u\rVert_{D^{2,2}}^{2}+\lVert u\rVert_{D^{2,2}}^{2}\right)^{p}}\label{eq: proof2-1-1}\\
 & \leq\underset{n\rightarrow\infty}{\overline{\lim}}\frac{\sum(R_{\alpha}\ast\mid u_{n}-u\mid^{p})\mid u_{n}-u\mid^{p}+\sum(R_{\alpha}\ast\mid u\mid^{p})\mid u\mid^{p}}{\lVert u_{n}-u\rVert_{D^{2,2}}^{2p}+\lVert u\rVert_{D^{2,2}}^{2p}}\nonumber \\
 & \leq\underset{n\rightarrow\infty}{\overline{\lim}}\frac{K_{3}\lVert u_{n}-u\rVert_{D^{2,2}}^{2p}+\sum(R_{\alpha}\ast\mid u\mid^{p})\mid u\mid^{p}}{\lVert u_{n}-u\rVert_{D^{2,2}}^{2p}+\lVert u\rVert_{D^{2,2}}^{2p}}.\nonumber 
\end{align}
Since $\lVert u\rVert_{D^{2,2}}\neq0$, we have that 
\[
\sum(R_{\alpha}\ast\mid u\mid^{p})\mid u\mid^{p}\geq K_{3}\lVert u\rVert_{D^{2,2}}^{2p},
\]
which yields
\[
\sum(R_{\alpha}\ast\mid u\mid^{p})\mid u\mid^{p}=K_{3}\lVert u\rVert_{D^{2,2}}^{2p}.
\]
By the same argument, $\lVert u\rVert_{D^{2,2}}=1$ and $u$ is a
maximizer. Let $v$ be a solution of
\[
-\Delta v=\mid-\Delta u\mid\in\ell^{2}(G).
\]
Since the Laplace operator $\Delta$ is an isometry from $D^{2,2}(G)$
to $\ell^{2}(G)$ (see \cite[Theorem 12]{HLM22} for a proof), and
by the inequality (\ref{eq: main inequality-3}), 
\[
\sum(R_{\alpha}\ast\mid v\mid^{p})\mid v\mid^{p}\lesssim\lVert\Delta v\rVert_{2}^{2p}=\lVert u\rVert_{D^{2,2}}^{2p}.
\]
Note that $u\leq v$ by the maximum principle. Replacing $u$ by $-u$,
we get $-u\leq v$ similarly. Hence, 
\[
0\leq\mid u\mid\leq v.
\]
In particular we have $\lVert\Delta v\rVert_{2}=\lVert\Delta u\rVert_{2}=1$
and 
\[
\sum(R_{\alpha}\ast\mid u\mid^{p})\mid u\mid^{p}\leq\sum(R_{\alpha}\ast v{}^{p})v{}^{p}.
\]
Therefore, we know that $v$ is a non-negative maximizer. It follows
from the Lagrange multiplier that $v$ is a non-negative solution
of (\ref{eq: bi-harmonic}). The maximum principle yields that it
is positive. 
\end{proof}
For $N\geq5$, $\alpha\in(0,N-4)$, $1<p<\frac{N-\alpha}{4}$, then
$\frac{2Np}{N+\alpha}>p^{\ast\ast}=\frac{Np}{N-2p}$, hence by the
second-order Sobolev inequality (\ref{eq: second-order sobo}) and
the HLS inequality (\ref{eq: hls1}) we get 
\begin{align}
\sum_{G}\left(R_{\alpha}\ast\mid u\mid^{p}\right) & \mid u\mid^{p}\leq C_{p,\alpha}\left(\sum_{G}\mid u\mid^{\frac{2Np}{N+\alpha}}\right)^{\frac{N+\alpha}{N}}\label{eq: main inequality-4}\\
 & \leq C_{p,\alpha}\tilde{C}_{\frac{2Np}{N+\alpha},p}\left(\sum_{G}\mid\Delta u\mid^{p}\right)^{2},\;\forall u\in D^{2,p}(G).\nonumber 
\end{align}

Similarly, we can define the variational problem for (\ref{eq: main inequality-4})
and get a positive solution of (\ref{eq: p-bi-harmonic}) as before.

$\mathbf{\boldsymbol{\mathbf{Acknowledgements}\mathbf{}\text{:}}}$
The author would like to thank Bobo Hua, Xueping Huang, Tao Zhang,
Fengwen Han and Tao Wang for helpful discussions and suggestions. 

\bibliographystyle{plain}
\bibliography{choquard_ref}

\end{document}